\documentclass[a4paper,10pt]{article}
\usepackage{setspace}
\usepackage{diagrams}
\usepackage{geometry}
\usepackage{epsfig}
\usepackage{amsfonts}
\usepackage{amsthm}
\usepackage{latexsym}
\usepackage{amsmath}
\usepackage{amscd}
\usepackage{amssymb}
\usepackage{enumerate}
\usepackage{graphicx,oldgerm}

\usepackage{authblk}

\theoremstyle{definition}
\newtheorem{thm}{Theorem}[section]
\newtheorem{cor}[thm]{Corollary}
\newtheorem{lem}[thm]{Lemma}

\newtheorem{prop}[thm]{Proposition}
\newtheorem{defi}[thm]{Definition}
\newtheorem{rem}[thm]{Remark}
\newtheorem{note}[thm]{Notation}
\newtheorem{para}[thm]{}

\DeclareMathOperator{\codim}{\mathrm{codim}}
\DeclareMathOperator{\NL}{\mathrm{NL}}
\DeclareMathOperator{\prim}{\mathrm{prim}}

\DeclareMathOperator{\red}{\mathrm{red}}

\DeclareMathOperator{\Hc}{\mathcal{H}om}

\DeclareMathOperator{\Ima}{\mathrm{Im}}

\DeclareMathOperator{\op}{\mathcal{O}_{\mathbb{P}^3}(d)}

\DeclareMathOperator{\p3}{\mathbb{P}^3}

\DeclareMathOperator{\pr}{\mathrm{pr}}

\DeclareMathOperator{\Spec}{\mathrm{Spec}}

\DeclareMathOperator{\N}{\mathcal{N}}
\DeclareMathOperator{\T}{\mathcal{T}}

\DeclareMathOperator{\I}{\mathcal{I}}
\DeclareMathOperator{\mo}{\mathcal{O}}

\newcommand{\mb}[1]{\mathbb{#1}}
\newcommand{\mc}[1]{\mathcal{#1}}
\newcommand{\mr}[1]{\mathrm{#1}}
\newcommand{\ov}[1]{\overline{#1}}
\title{Non-reduced components of the Noether-Lefschetz locus}
\author{Ananyo Dan\thanks{The author has been supported by the DFG under Grant KL-$2244/2-1$\\ \newline Humboldt Universit\"{a}t zu Berlin, Institut f\"{u}r Mathematik, Unter den Linden $6$, Berlin $10099$.\\ e-mail: dan@mathematik.hu-berlin.de\\Mathematics Subject Classification: $14$C$30$, $14$D$07$, $13$D$10$}}
\date{\today}

\begin{document}
\maketitle
\begin{abstract}
 In this article we give a criterion for a component of the Noether-Lefschetz locus to be non-reduced. We also produce several examples of such components.
\end{abstract}

\doublespacing

\section{Introduction}
In $1882$, M. Noether claimed the following statement which was later proven by Lefschetz:
For $d \ge 4$, a very general smooth degree $d$ surface $X$ in $\mathbb{P}^3$ has Picard number, 
denoted $\rho(X)$ equal to $1$.
This motivates the definition of the \emph{Noether-Lefschetz locus}, denoted by $\NL_d$ parametrizing the 
space of smooth degree $d$ surfaces $X$ in $\mathbb{P}^3$ with $\rho(X)>1$.
One of the interesting problems is the geometry of the Noether-Lefschetz locus. We know that an 
irreducible component $L$ of $\NL_d$ is an algebraic scheme
\cite{kap} and is not necessarily reduced. In this article we study the scheme structure of $L$ and give a necessary and sufficient condition for non-reducedness of $L$. 
Finally, we give examples of such components.

One of the first results in this direction is due to Green, Griffiths, Voisin and others (\cite{M3, GH, v2}) which states that 
for an irreducible component $L$ of the Noether-Lefschetz locus and $d \ge 4$, the codimension of $L$ in $U_d$ satisfies the
following inequality:
\[d-3 \le \codim (L,U_d) \le \binom{d-1}{3}.\] 
The upper bound follows easily from the fact that $\dim H^{2,0}(X)=\binom{d-1}{3}$ for any $X \in U_d$ (see \cite[\S $6$]{v5}).
We say that $L$ is a \emph{general} component if $\codim L=\binom{d-1}{3}$ and \emph{special} otherwise. 
It was proven by Ciliberto, Harris and Miranda \cite{ca1} that for $d \ge 4$, 
the Noether-Lefschetz locus has infinitely many general components and the union of these components is Zariski dense in $U_d$. 
The guiding principle of much work in the area has been the expectation that special components should be due to the presence of low degree curves.
Voisin \cite{v3} and Green \cite{M3} independently 
proved that for $d \ge 5$, $\codim L=d-3$ if and only if $L$ parametrizes surfaces of degree $d$ containing a line. This component is reduced.
If $d-3<\codim L \le 2d-7$ then $\codim L=2d-7$ and $L$ parametrizes the surfaces containing a conic and is reduced.
Maclean \cite{c1} showed that for $d=5$ there exists a component $L$ of codimension $2d-6$ such that $L_{\red}$, the associated reduced subscheme,
parametrizes the surfaces containing two lines on the same plane 
and this component is non-reduced.
Otwinowska \cite{A1} proved that for an integer $b>0$ and $d \gg b$ if $\codim L \le bd$ then $L$ parametrizes surfaces containing a curve of 
degree at most $b$.

We now briefly discuss the main ideas used in this article. One of the important observations is that an irreducible component of the Noether-Lefschetz locus is a Hodge locus. This is a consequence of the
Lefschetz $(1,1)$-theorem. We recall the notion of the Hodge locus.
Denote by $U_d \subseteq \mathbb{P}(H^0(\mathbb{P}^3, \op))$ 
the open subscheme parametrizing smooth projective hypersurfaces in $\mathbb{P}^3$ of degree $d$.
Let $\mathcal{X} \xrightarrow{\pi} U_d$ be the corresponding universal family. For a given $F \in U_d$, denote by $X_F$ the surface $X_F:=\pi^{-1}(F)$. 
Let $X \in U_d$ and $U \subseteq U_d$ be a simply connected neighbourhood of $X$ in $U_d$ (under the analytic topology).
Then $\pi|_{\pi^{-1}(U)}$ induces a variation of Hodge structure $(\mathcal{H}, \nabla)$ on $U$ where $\mathcal{H}:=R^2\pi_*\mathbb{Z} \otimes 
\mathcal{O}_U$ and $\nabla$ is the Gauss-Manin connection. Note that $\mathcal{H}$ defines a local system on $U$ whose fiber over 
a point $F \in U$ is $H^2(X_F,\mathbb{Z})$ where $X_F=\pi^{-1}(F)$. Consider a non-zero element $\gamma_0 \in H^2(X_F,\mathbb{Z}) \cap H^{1,1}(X_F,\mathbb{C})$
such that $\gamma_0  \not= c_1(\mathcal{O}_{X_F}(k))$ for $k \in \mathbb{Z}_{>0}$. 
This defines a section $\gamma \in (\mathcal{H} \otimes \mathbb{C})(U)$. Let $\overline{\gamma}$ be the image
of $\gamma$ in $\mathcal{H}/F^2(\mathcal{H} \otimes \mathbb{C})$. The Hodge loci, denoted $\NL(\gamma)$ is then defined as
\[ \NL(\gamma):=\{G \in U | \overline{\gamma}_G=0\},\]
where $\overline{\gamma}_G$ denotes the value at $G$ of the section $\overline{\gamma}$. For an irreducible component 
$L \subset \NL_d$ and $X \in L$, general, we can find $\gamma \in H^2(X,\mb{Z}) \cap H^{1,1}(X,\mb{C})$
such that $\overline{\NL(\gamma)}=L$ (the closure taken under Zariski topology on $U_d$). 
The Gauss-Manin connection gives rise to a differential map, $\ov{\nabla}(\gamma):T_XU \to H^2(\mo_X)$, where $T_XU$ is the 
tangent space to $U$ at the point corresponding to $X$.
The tangent space at $X$ to $\NL(\gamma)$ is defined to be $\ker(\ov{\nabla}(\gamma))$.

For a better understanding of the tangent space to $\NL(\gamma)$ we will use the theory of semi-regularity as introduced by Bloch \cite{b1}. 
Recall, given a curve $C$ in a smooth surface $X$ in $\p3$, the semi-regularity map is the morphism $\pi:H^1(\N_{C|X}) \to H^2(\mo_X)$
arising from the short exact sequence,
\[0 \to \mo_X \to \mo_X(C) \to \N_{C|X} \to 0.\] We show in Theorem \ref{hf12}:
\begin{thm}
Let $X$ be a smooth surface in $\p3$, $C$ a curve in $X$ and $\gamma$ the cohomology class of $C$. 
Then, the differential map $\ov{\nabla}(\gamma):H^0(\N_{X|\p3}) \to H^2(\mo_X)$ factors through the semi-regularity map.
\end{thm}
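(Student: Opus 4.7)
The plan is to factor $\ov{\nabla}(\gamma)$ through the semi-regularity map by constructing an intermediate map $\psi\colon H^0(\N_{X|\p3})\to H^1(\N_{C|X})$ and then identifying both $\pi\circ\psi$ and $\ov{\nabla}(\gamma)$ with the obstruction to extending the line bundle $\mo_X(C)$ along the first-order deformation of $X$ in $\p3$ determined by a tangent vector $v$.

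First I would construct $\psi$ from the normal bundle exact sequence
\[0\to\N_{C|X}\to\N_{C|\p3}\to\N_{X|\p3}|_C\to 0.\]
Taking the connecting homomorphism $\delta\colon H^0(\N_{X|\p3}|_C)\to H^1(\N_{C|X})$ and precomposing with the restriction $r\colon H^0(\N_{X|\p3})\to H^0(\N_{X|\p3}|_C)$ gives $\psi:=\delta\circ r$. For $v\in H^0(\N_{X|\p3})$, standard deformation theory identifies $\psi(v)$ with the obstruction to extending $C\subset X$ to a closed subscheme inside the first-order deformation $\mc{X}_v\subset\p3\times\Spec\mb{C}[\epsilon]/(\epsilon^2)$ of $X$ defined by $v$.

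Then, applying $\pi$ (coming from $0\to\mo_X\to\mo_X(C)\to\N_{C|X}\to 0$) to $\psi(v)$ yields, via Bloch's semi-regularity theorem \cite{b1}, the obstruction to extending the line bundle $\mo_X(C)$ to a line bundle on $\mc{X}_v$. Separately, the classical Kodaira-Spencer description of the Gauss-Manin connection gives
\[\ov{\nabla}(\gamma)(v)=\kappa(v)\cup c_1(\mo_X(C))\in H^2(\mo_X),\]
where $\kappa\colon H^0(\N_{X|\p3})\to H^1(T_X)$ is the Kodaira-Spencer map and the cup denotes the contraction $H^1(T_X)\otimes H^1(\Omega^1_X)\to H^2(\mo_X)$. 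Via the Atiyah-class formalism, this cup product is again the obstruction to extending $\mo_X(C)$ along $\kappa(v)$, which gives $\ov{\nabla}(\gamma)(v)=\pi(\psi(v))$ as desired.

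The principal technical point will be the identification of $\kappa(v)\cup c_1(\mo_X(C))$ with the deformation-theoretic line-bundle obstruction. I would handle this using the Atiyah extension $0\to\Omega^1_X\otimes\mo_X(C)\to\mc{P}^1(\mo_X(C))\to\mo_X(C)\to 0$, whose class in $\Ext^1(\mo_X(C),\Omega^1_X\otimes\mo_X(C))$ represents $c_1(\mo_X(C))$; equivalently, this can be verified by a Čech-cocycle calculation matching the $d\log$ representative of $c_1(\mo_X(C))$ against difference-of-lifts representatives of $\kappa(v)$. All remaining steps reduce to naturality of connecting homomorphisms and standard obstruction theory.
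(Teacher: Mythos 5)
Your argument is correct, and it reaches the factorization $\ov{\nabla}(\gamma)=\pi\circ\delta_C\circ\rho_C$ by a genuinely different middle step than the paper. Both proofs start from the same input, namely the Kodaira--Spencer description $\ov{\nabla}(\gamma)(v)=\kappa(v)\cup\gamma$ (Theorem \ref{a3}). From there the paper quotes Flenner's factorization $\cup[C]=\pi_C\circ u_*$ through $H^1(\N_{C|X})$ (Theorem \ref{hf4}) and then performs an explicit sheaf-level diagram chase to identify $u_*\circ\rho$ with $\delta_C\circ\rho_C$: it constructs the intermediate map $j_2:H^0(\N_{X|\p3}\otimes\mo_C)\to H^1(\T_X\otimes\mo_C)$ from the restricted tangent sequence, and the commutativity of the second triangle rests on Lemma \ref{de1}, a local computation (using that $C$ is lci in $X$) whose dual under $\Hc_C(-,\mo_C)$ compares the conormal and cotangent sequences. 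You bypass both the citation of Flenner and the $j_2$ chase by interpreting everything as obstruction classes: $\delta_C(\rho_C(v))$ is the obstruction to lifting $C$ into $\mc{X}_v$, its image under $\pi$ is the obstruction to lifting the line bundle $\mo_X(C)$, and the Atiyah-class computation identifies the latter with $\kappa(v)\cup c_1(\mo_X(C))$. This is more conceptual and makes the deformation-theoretic meaning transparent (it is essentially the mechanism behind Bloch's Theorem \ref{b71}), at the cost of relying on three ``standard'' compatibilities (connecting map $=$ subscheme obstruction, $\pi$ intertwines subscheme and line-bundle obstructions, Atiyah class computes the line-bundle obstruction) that you would need to pin down with consistent sign conventions; the paper's route is more pedestrian but produces exactly the explicit commutative diagram, including the flag-Hilbert-scheme square, that is reused verbatim in Corollaries \ref{hf12c} and \ref{dim4}. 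Since you do define $\psi=\delta\circ r$ concretely, your version recovers the same factorization and would serve the later applications equally well.
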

Using this we can compare infinitesimal deformations of the pair $(C,X)$ such that $C$ remains a curve to the
infinitesimal deformations of $X$ such that the cohomology class of $C$ remains a Hodge class.

We then use this to give a necessary and sufficient condition for non-reducedness of an irreducible component of $\NL_d$:
\begin{thm}\label{in01 }
 Consider an irreducible component $L$ of the Noether-Lefschetz locus. Then, for a general element $X \in L$, there exists $\gamma \in H^{1,1}(X,\mb{Z})$ satisfying the 
 following conditions:
 \begin{enumerate}
  \item $\gamma$ is the cohomology of a curve, say $C$ in $X$, such that the corresponding semi-regularity map is injective
  (i.e., $C$ is semi-regular)
  \item There is an irreducible component, say $H_\gamma$ of the flag Hilbert scheme parametrizing pairs $(C'\subset X')$ for 
  $C'$ a curve (resp. $X'$ a surface) with Hilbert polynomial the same as $C$ (resp. $X$) 
   such that $\pr_2(H_\gamma)_{\red}$ is isomorphic to $L_{\red}$ and $\pr_2(T_{(C,X)}H_{\gamma})$
  is the same as $T_X(\NL(\gamma))$.
 \item $H^1(\mo_X(-C)(d))=0$ i.e., for every infinitesimal deformation of $C$ there exists a corresponding infinitesimal deformation of $X$
 containing it.
 \end{enumerate}
Furthermore, $L$ is non-reduced if and only if $\dim \pr_1(H_\gamma) <h^0(\N_{C|\p3})$.
\end{thm}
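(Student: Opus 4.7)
The plan splits into two parts: constructing the class $\gamma$, curve $C$, and flag Hilbert scheme component $H_\gamma$ verifying conditions (1)--(3), and then reading the non-reducedness criterion off a dimension count through $H_\gamma$.

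For a general $X \in L$, I first invoke the Lefschetz $(1,1)$-theorem to obtain a primitive Hodge class $\gamma_0 \in H^{1,1}(X,\z)$ with $\overline{\NL(\gamma_0)} = L$ in a neighbourhood of $X$. Since the hyperplane class $h$ is Hodge across $U_d$, $\NL(\gamma_0) = \NL(\gamma_0 + mh)$ for every $m$, and I replace $\gamma_0$ by $\gamma := \gamma_0 + mh$ with $m \gg 0$. A sufficiently large twist makes $\mo_X(\gamma)$ very ample, so Bertini yields a smooth irreducible $C \in |\gamma|$ representing $\gamma$; Serre vanishing then gives $H^1(\mo_X(C)) = 0$ (which, via the long exact sequence of $0 \to \mo_X \to \mo_X(C) \to \N_{C|X} \to 0$, is condition (1)), and Serre duality combined with Serre vanishing yields $H^1(\mo_X(-C)(d)) \cong H^1(\mo_X(C - 4H))^\vee = 0$, giving (3). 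Taking $H_\gamma$ to be the irreducible component of the flag Hilbert scheme through $(C,X)$, Lefschetz $(1,1)$ applied at every general $X' \in L$ supplies a curve of class $\gamma$ in $X'$, so $\pr_2(H_\gamma)_{\red} = L_{\red}$, which is half of (2).

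For the tangent-space identity in (2), I use the standard description
\[
T_{(C,X)} H_\gamma = \{(v_1,v_2) \in H^0(\N_{C|\p3}) \oplus H^0(\N_{X|\p3}) : \psi(v_1) = \phi(v_2)\},
\]
where $\phi$ is the restriction $H^0(\N_{X|\p3}) \to H^0(\N_{X|\p3}|_C)$ and $\psi$ comes from the surjection $\N_{C|\p3} \twoheadrightarrow \N_{X|\p3}|_C$. Thus $\pr_2(T_{(C,X)} H_\gamma) = \ker(\delta \circ \phi)$, where $\delta : H^0(\N_{X|\p3}|_C) \to H^1(\N_{C|X})$ is the connecting map of $0 \to \N_{C|X} \to \N_{C|\p3} \to \N_{X|\p3}|_C \to 0$. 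By Theorem~\ref{hf12}, $\ov{\nabla}(\gamma) = \pi \circ \delta \circ \phi$ with $\pi$ the semi-regularity map, and $\pi$ is injective by (1); hence $T_X \NL(\gamma) = \ker \ov{\nabla}(\gamma) = \ker(\delta \circ \phi) = \pr_2(T_{(C,X)} H_\gamma)$.

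The dimension comparison is then arithmetic. The kernel of $\pr_2|_{T_{(C,X)} H_\gamma}$ is $H^0(\N_{C|X})$, and because $H^1(\mo_X) = 0$ the general $\pr_2$-fiber is the linear system $|C|$ of dimension $h^0(\mo_X(C)) - 1 = h^0(\N_{C|X})$; thus $\dim T_X L - \dim L = \dim T_{(C,X)} H_\gamma - \dim H_\gamma$. Condition (3) makes $\phi$ surjective, giving $\dim T_{(C,X)} H_\gamma = h^0(\N_{C|\p3}) + h^0(\N_{X|\p3}) - h^0(\mo_C(d))$; it also forces $H^1(\I_C(d)) = 0$, so the $\pr_1$-fiber $\mb{P}(H^0(\I_C(d)))$ has dimension $h^0(\N_{X|\p3}) - h^0(\mo_C(d))$ and $\dim H_\gamma = \dim \pr_1(H_\gamma) + h^0(\N_{X|\p3}) - h^0(\mo_C(d))$. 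Subtraction gives $\dim T_X L - \dim L = h^0(\N_{C|\p3}) - \dim \pr_1(H_\gamma) \ge 0$, equal to zero precisely when $L$ is reduced, which is the stated criterion. The subtlest step I anticipate is arranging the twist $m$ to simultaneously meet all three vanishing/very-ampleness/Bertini requirements, and verifying that the $\pr_1$- and $\pr_2$-fibers of $H_\gamma$ attain their expected linear-system dimensions uniformly in a Zariski neighbourhood of $(C,X)$, not merely at the level of tangent spaces.
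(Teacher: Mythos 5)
Most of your argument tracks the paper's own route closely and correctly: the construction of $C$ by twisting with a large multiple of the hyperplane class and applying Bertini, Serre vanishing and Serre duality for conditions (1) and (3) is the content of Proposition \ref{a81} and Corollary \ref{a82}; the identification $\pr_2(T_{(C,X)}H_\gamma)=T_X\NL(\gamma)$ via the fiber-product description of the flag tangent space, the connecting map $\delta$, and injectivity of the semi-regularity map is Corollary \ref{hf12c}; and your final dimension bookkeeping reproduces Proposition \ref{dim} and Theorem \ref{a4} (I checked the arithmetic; it is right, including $h^0(\I_C(d))-1=h^0(\N_{X|\p3})-h^0(\mo_C(d))$).

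The one genuine gap is your justification of $\pr_2(H_\gamma)_{\red}=L_{\red}$. You assert that ``Lefschetz $(1,1)$ applied at every general $X'\in L$ supplies a curve of class $\gamma$ in $X'$.'' Lefschetz $(1,1)$ supplies only a class in $\NS(X')$, i.e.\ a $\mb{Z}$-linear combination of integral curves; it does not by itself produce an effective divisor with the Hilbert polynomial of $C$, which is what membership in the flag Hilbert scheme requires. This inclusion $L_{\red}\subseteq\pr_2(H_{P,Q_d})$ is precisely where the paper invokes Bloch's semi-regularity theorem (Theorem \ref{b71}, via Theorem \ref{dim1}): semi-regularity of $C$ guarantees that $C$ itself lifts along every deformation of $X$ for which $[C]$ stays Hodge. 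Your route can be repaired without Bloch: $\chi(\mo_{X'}(\gamma))$ is computed by Riemann--Roch from $\gamma^2$ and $\gamma\cdot H_{X'}$, hence equals $\chi(\mo_X(C))>0$, while $h^2(\mo_{X'}(\gamma))=h^0(\mo_{X'}(K_{X'}-\gamma))=0$ since $(K_{X'}-\gamma)\cdot H_{X'}<0$ for $m\gg0$; therefore $h^0(\mo_{X'}(\gamma))>0$ and the class is effective on every $X'\in L$. Either way, the step needs an argument, and you should also add a sentence on why a \emph{single} irreducible component of $H_{P,Q_d}$ dominates $L$ (take a component of $\pr_2^{-1}(L_{\red})$ surjecting onto the irreducible $L_{\red}$, pass to the component of $H_{P,Q_d}$ containing it, and use that its $\pr_2$-image is irreducible, closed, contained in $\NL_d$, and contains $L$). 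The uniformity of the $\pr_1$- and $\pr_2$-fiber dimensions that you flag at the end is indeed assumed, but the paper assumes it in exactly the same way in Proposition \ref{dim}, so I would not count that against you.
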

See Proposition \ref{a81}, Corollary \ref{a82} and Theorem \ref{a4} 
for the precise statements and proofs.
The proofs of the parts $(1)$ and $(3)$ is an application of Serre's vanishing theorem. Part $(2)$ uses the description of
$\ov{\nabla}(\gamma)$ in terms of the semi-regularity map as given in Theorem \ref{hf12}.

In order to give examples in the case $L$ is non-reduced, we fix an integer $d \ge 5$ and consider certain irreducible non-reduced components of the 
Hilbert scheme of curves in $\p3$ such that the general element $C$ is contained in a smooth degree $d$ surface, $X$.
We show in Lemma \ref{hf11} and Theorem \ref{cas12} that if $d-4 \ge \deg(C)$ then $\mo_X(-C)$ is $d$-regular in the sense of Castelnuovo-Mumford. 
This tells us that given an infinitesimal deformation of such $C$ there exists a corresponding infinitesimal deformation of 
$X$ containing it. Using this we show that the Hodge locus corresponding to the cohomology class of $C$ is non-reduced. Among other examples we show that,
\begin{thm}
 Let $d \ge 5$, $X$ a surface containing two distinct coplanar lines, say $l_1, l_2$, $C$ a divisor in $X$ of the form $2l_1+l_2$ and $\gamma$ the cohomology class of $C$.
 Then, $\ov{\NL(\gamma)}$ (closure taken in $U_d$ under Zariski topology) is non-reduced.
\end{thm}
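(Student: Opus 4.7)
The strategy is to apply the criterion of Theorem \ref{in01 } to the triple $(X,C,\gamma)$, with $C=2l_1+l_2$ viewed as a Cartier divisor on $X$. Let $H_\gamma$ denote the component of the flag Hilbert scheme containing $(C,X)$. The plan is to verify the three hypotheses of Theorem \ref{in01 }: (i) semi-regularity of $C$ in $X$; (ii) the identification $\pr_2(H_\gamma)_{\red} \cong L_{\red}$ for $L=\ov{\NL(\gamma)}$; and (iii) the vanishing $H^1(\mo_X(-C)(d))=0$. Non-reducedness of $L$ then reduces to establishing the strict inequality $\dim \pr_1(H_\gamma) < h^0(\N_{C|\p3})$.

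For (i), the long exact sequence of
\[0 \to \mo_X \to \mo_X(C) \to \N_{C|X} \to 0\]
reduces injectivity of the semi-regularity map to the vanishing $H^1(\mo_X(C))=0$, which I would obtain from Serre duality together with an elementary cohomology computation using the planarity of $C$ and $d \ge 5$. Condition (iii) is the most delicate verification: the Castelnuovo--Mumford bound of Lemma \ref{hf11} and Theorem \ref{cas12} only forces $\mo_X(-C)$ to be $d$-regular when $d-4 \ge \deg C = 3$, hence leaves $d=5,6$ uncovered. For these values I would exploit the fact that $C$ is supported on $X \cap P$, where $P$ is the plane spanned by $l_1,l_2$, and compare $\mo_X(-C)$ with $\mo_X(-X\cap P)$ via a short exact sequence whose cokernel is a line bundle supported on $l_1$; this reduces $H^1(\mo_X(-C)(d))$ to $H^1$ of twists of $\mo_X(k)$ for small $k$ and of line bundles on $l_1 \cong \mathbb{P}^1$, all of which vanish for $d \ge 5$ by Kodaira or Serre vanishing. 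Condition (ii) is a formal consequence of Proposition \ref{a81} and Corollary \ref{a82} once (i) and (iii) are in place.

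The main obstacle is the strict inequality. For the upper bound, every curve parametrized by $\pr_1(H_\gamma)$ has the form $2l_1'+l_2'$ with $l_1',l_2' \subset \p3$ coplanar; such coplanar pairs form a locus of dimension at most $7$ (three parameters for the plane $P \in (\p3)^*$ and two for each line in $P$), so $\dim \pr_1(H_\gamma) \le 7$. For the lower bound on $h^0(\N_{C|\p3})$ I would use the normal bundle sequence
\[0 \to \N_{C|X} \to \N_{C|\p3} \to \N_{X|\p3}|_C \to 0,\]
combined with $\N_{X|\p3}|_C \cong \mo_C(d)$ and the plane-cubic calculation $h^0(\mo_C(d)) = 3d$ obtained from $0 \to \mo_P(d-3) \to \mo_P(d) \to \mo_C(d) \to 0$. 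Tracking the connecting homomorphism via (i), this yields $h^0(\N_{C|\p3}) \ge 3d$ up to a controlled correction, strictly exceeding $7$ for every $d \ge 5$. Applying Theorem \ref{in01 } then produces the non-reducedness of $\ov{\NL(\gamma)}$.
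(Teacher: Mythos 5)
Your overall strategy---verify the hypotheses of Theorem \ref{in01 } and then establish the strict inequality $\dim\pr_1(H_\gamma)<h^0(\N_{C|\p3})$---is the right framework, but the two steps you treat as routine are in fact the substance of the proof, and your arguments for them do not work. First, the identification of $\ov{\NL(\gamma)}_{\red}$ with $\pr_2(H_\gamma)_{\red}$ is \emph{not} ``a formal consequence of Proposition \ref{a81} and Corollary \ref{a82}'': those results replace $C$ by a curve in $|C+mH_X|$ for $m\gg 0$ and say nothing about the given class $\gamma=[2l_1+l_2]$. A priori $\NL(\gamma)$ could coincide with a larger locus (e.g.\ the locus of surfaces containing a line or a conic), in which case the tangent-space comparison collapses. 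The paper's entire proof of this theorem is devoted to excluding exactly this: one shows $\codim\ov{\NL(\gamma)}\le 2d-6$, invokes the Green--Voisin classification of components of codimension less than $2d-6$, and then rules out by an explicit intersection-number computation (following Maclean) that the class of a line or conic can lie in the span of $[l_1],[l_2],[H_X]$. None of this appears in your proposal.

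Second, your dimension count rests on a misidentification of the scheme $C$. The divisor $2l_1+l_2$ on $X$ is not the plane cubic $2l_1+l_2\subset P$: the multiplicity-two structure on $l_1$ sticks out of the plane (indeed $2l_1+l_2\not\le X\cap P=l_1+l_2+D$ as divisors on $X$), and $C$ has arithmetic genus $2-d$, not that of a plane cubic. Consequently $h^0(\mo_C(d))=4d-1$, not $3d$, and, more importantly, a curve parametrized by $\pr_1(H_\gamma)$ is not determined by a pair of coplanar lines: the ribbon structure on $l_1'$ carries on the order of $2d-2$ additional moduli, and from Proposition \ref{dim} together with $\codim\ov{\NL(\gamma)}=2d-6$ one computes $\dim\pr_1(H_\gamma)=2d+5$, not $\le 7$. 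With the correct numbers the elementary bounds give only $12\le h^0(\N_{C|\p3})\le 4d-1$, which does not decide the comparison with $2d+5$; the strict inequality $\dim\pr_1(H_\gamma)<h^0(\N_{C|\p3})$ is precisely the non-reducedness of the Martin-Deschamps--Perrin component of the Hilbert scheme (Theorem \ref{a84}), a substantive cited input that cannot be replaced by a naive parameter count. (Minor points: your worry about $d=5,6$ in condition (iii) is unnecessary, since Theorem \ref{cas12} yields $d$-regularity of $\mo_X(-2l_1-l_2)$ for all $d\ge 5$; your proposed comparison with $\mo_X(-X\cap P)$ fails because $C$ is not a subdivisor of $X\cap P$; and your verification of semi-regularity via ``planarity of $C$'' does not apply to the non-planar scheme $C$.)
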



{\bf{Acknowledgement:}} I would like to thank R. Kloosterman for introducing me to the topic, reading the preliminary version of this article and several helpful discussions.
I would also like to thank V. Srinivas for useful suggestions and to N. Tarasca for reading parts of the article.

\section{Introduction to Noether-Lefschetz locus}

\begin{para}
 In this section we recall the basic definitions of Noether-Lefschetz locus. See \cite[\S $9, 10$]{v4} and \cite[\S $5, 6$]{v5} for a detailed presentation of the subject.
\end{para}

\begin{defi}
 Recall, for a fixed integer $d \ge 5$, the \emph{Noether-Lefschetz locus}, denoted $\NL_d$, parametrizes the space of smooth degree $d$ surfaces
 in $\p3$ with Picard number greater than $1$. Using the Lefschetz $(1,1)$-theorem this is the parametrizing space of smooth degree $d$ surfaces with 
 $H^{1,1}(X,\mb{C}) \cap H^2(X,\mb{Z}) \not= \mb{Z}$. 
\end{defi}

\begin{note}\label{nl1}
 Let $X \in U_d$ and $\mo_X(1)$, the very ample line bundle on $X$ determined by the closed immersion $X \hookrightarrow \p3$ arising (as in \cite[II.Ex.$2.14$(b)]{R1}) from the graded homomorphism
of graded rings $S \to S/(F_X)$, where $S=\Gamma_*(\mo_{\p3})$ and $F_X$ is the defining equations of $X$.
Denote by $H_X$ the very ample line bundle $\mo_X(1)$. 
\end{note}

     \begin{note}
     Let $X$ be a surface. Denote by $H^2(X,\mb{C})_{\prim}$, the primitive cohomology. There is a natural
      projection map from $H^{2}(X,\mb{C})$ to $H^{2}(X,\mb{C})_{\prim}$. For $\gamma \in H^{2}(X,\mb{C})$, denote by $\gamma_{\prim}$ the image of $\gamma$ under this morphism.
      Since the very ample line bundle $H_X$ remains of type $(1,1)$ in the family $\mc{X}$, we can conclude that
        $\gamma \in H^{1,1}(X)$ remains of type $(1,1)$ if and only if $\gamma_{\prim}$ remains of type $(1,1)$. In particular,
      $\NL(\gamma)=\NL(\gamma_{\prim})$.
     \end{note}

\begin{para}
 Note that, $\NL_d$ is a countable union of subvarieties. Every irreducible component of $\NL_d$ is locally of the 
 form $\NL(\gamma)$ for some $\gamma \in H^{1,1}(X) \cap H^2(X,\mb{Z})$, $X \in \NL_d$ such that $\gamma_{\prim} \not=0$. 
 \begin{lem}[{\cite[Lemma $5.13$]{v5}}]
 There is a natural analytic scheme structure on $\ov{\NL(\gamma)}$ (closure in $U_d$ under Zariski topology).
\end{lem}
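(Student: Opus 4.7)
The plan is to build the analytic scheme structure locally and then glue. Fix a point $F \in \ov{\NL(\gamma)}$ and choose a simply connected analytic open neighbourhood $V \subseteq U_d$ containing $F$. Since $V$ is simply connected, the local system $R^2\pi_*\mb{Z}|_V$ is trivial, and therefore the class $\gamma$ (defined at one point, together with the requirement of being Hodge) extends to a unique flat section $\gamma_V \in \mc{H}(V)$. Its image $\ov{\gamma}_V$ in the holomorphic vector bundle $(\mc{H}/F^2\mc{H})|_V$ is a holomorphic section, and its zero locus carries a natural structure of analytic subscheme of $V$, defined by the ideal generated by the components of $\ov{\gamma}_V$ in any local trivialization of $\mc{H}/F^2\mc{H}$. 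This gives, on the open $V$, the desired local analytic scheme structure on $\NL(\gamma) \cap V$.

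Next I would verify that the construction is independent of the chosen simply connected neighbourhood. On the intersection of two such neighbourhoods $V_1 \cap V_2$, any two flat lifts of $\gamma$ differ by an element of the monodromy group acting on $H^2(X_F,\mb{Z})$. Since the monodromy acts by holomorphic automorphisms of $\mc{H}$ preserving the Hodge filtration $F^2\mc{H}$, the analytic scheme structures defined by the two lifts are identified. Hence the local data glue to an analytic scheme structure on the analytic subset $\NL(\gamma) \subseteq U_d$.

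To pass to the Zariski closure, I would invoke (or recall) the algebraicity theorem of Cattani--Deligne--Kaplan: the Hodge locus $\NL(\gamma)$ is an algebraic subvariety of $U_d$, so $\ov{\NL(\gamma)}$ (closure in the Zariski topology) coincides set-theoretically with $\NL(\gamma)$ up to possibly isolated boundary points, and in particular $\NL(\gamma)$ is already closed in $U_d$ under the Zariski topology when restricted to each irreducible component. The local analytic scheme structure therefore extends to the whole closure, and the glued sheaf of ideals defines a coherent analytic subscheme of $U_d^{\mr{an}}$ supported on $\ov{\NL(\gamma)}$.

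The main obstacle I expect is the bookkeeping involved in the gluing step. Two simply connected charts meeting around a loop with non-trivial monodromy produce two different flat extensions of $\gamma$, and one must check carefully that the corresponding zero-schemes in $\mc{H}/F^2\mc{H}$ are genuinely identified by the monodromy action (not merely share the same support). Once this compatibility is verified, the rest is formal: the zero-scheme of a holomorphic section of a holomorphic vector bundle is always an analytic subscheme, and the Cattani--Deligne--Kaplan algebraicity result reconciles the analytic construction with the Zariski closure in $U_d$.
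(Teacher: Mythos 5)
The paper does not prove this lemma at all --- it is quoted from \cite[Lemma 5.13]{v5} --- so your argument has to stand on its own. Your first step is the standard (and correct) construction: over a simply connected $V$ the flat extension of $\gamma$ projects to a holomorphic section of the quotient bundle, and the zero scheme of a holomorphic section of a holomorphic vector bundle is an analytic subscheme. (Minor point: for a real class of weight two the relevant quotient is $\mc{H}/F^1(\mc{H}\otimes\mb{C})\cong R^2\pi_*\mc{O}$, which is why $\ov{\nabla}(\gamma)$ lands in $H^2(\mo_X)$; you are following the paper's $F^2$, so no harm done.) This already puts a natural analytic scheme structure on $\NL(\gamma)\subseteq U$.

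The gluing step, however, has a genuine gap. You claim that two flat lifts of $\gamma$ over $V_1\cap V_2$ differ by a monodromy element $g$ which ``preserves the Hodge filtration,'' so that the two zero-schemes are identified. Monodromy preserves the integral lattice and the intersection form, but it does not preserve the Hodge filtration --- if it did, the period map would be locally constant. More importantly, $g$ acts on the fibres of the local system and not on the base, so $\{\ov{\gamma}=0\}$ and $\{\ov{g\gamma}=0\}$ are in general two genuinely different analytic subsets of $V_1\cap V_2$, not two presentations of the same one (for instance, distinct monodromy translates of the class of a line cut out distinct components of the Noether--Lefschetz locus). Hence the local zero-schemes do not glue to a single subscheme of $U_d$; what is globally well defined is the countable union over the monodromy orbit of $\gamma$, or equivalently the locus of Hodge classes inside the total space of the local system, where the tautological section has no monodromy ambiguity. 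The natural structure on the single component $\ov{\NL(\gamma)}$ is then the one induced at each of its points by a flat continuation of $\gamma$ along a path \emph{inside the component} (along which $\gamma$ remains an integral Hodge class), equivalently the scheme-theoretic image of the corresponding component upstairs; this is the step your argument is missing. The appeal to Cattani--Deligne--Kaplan correctly gives algebraicity of $\NL(\gamma)$, so that passing to the Zariski closure adds nothing set-theoretically, but it does not by itself repair the gluing and is far stronger than what the lemma needs.
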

\end{para}

\begin{note}
 Let $X_1$ be a projective scheme, $X_2 \subset X_1$, a closed subscheme. Denote by $\N_{X_2|X_1}$ the normal sheaf $\Hc_{X_1}(\I_{X_2/X_1},\mo_{X_2})$, where $\I_{X_2/X_1}$ is the ideal sheaf of $X_2$ in $X_1$.
\end{note}

\begin{defi} 
We now discuss the tangent space to the Hodge locus, $\NL(\gamma)$.
We know that the tangent space to $U$ at $X$, $T_XU$ is isomorphic to $H^0(\N_{X|\p3})$.
This is because $U$ is an open subscheme of the Hilbert scheme $H_{Q_d}$, the tangent space of which at the point $X$ is simply
$H^0(\N_{X|\p3})$. 
Given the variation of Hodge structure above, we have (by Griffith's transversality) the differential map:
\[\overline{\nabla}:H^{1,1}(X) \to \mathrm{Hom}(T_XU,H^2(X,\mathcal{O}_X))\]induced by the Gauss-Manin connection.
Given $\gamma \in H^{1,1}(X)$ this induces a morphism, denoted $\overline{\nabla}(\gamma)$ from $T_XU$ to $H^2(\mo_X)$.
\end{defi}

\begin{lem}[{\cite[Lemma $5.16$]{v5}}]\label{tan1}
The tangent space at $X$ to $\NL(\gamma)$ equals $\ker(\overline{\nabla}(\gamma))$.
\end{lem}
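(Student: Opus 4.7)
My plan is to realize $\NL(\gamma) \cap U$ as the scheme-theoretic zero locus of a holomorphic section of a vector bundle on $U$ and to invoke the standard principle that at a zero the Zariski tangent space is the kernel of the linearization. Concretely, $\overline{\gamma}$ is a holomorphic section of the quotient bundle $(\mathcal{H}\otimes\mathbb{C})/F^2$ on $U$, whose fibre at $Y \in U$ is canonically identified with $H^2(\mathcal{O}_{X_Y})$. The flat section $\gamma$ exists by simple connectedness of $U$, and $\overline{\gamma}_X = 0$ because $\gamma_0 \in H^{1,1}(X)$. Therefore $\NL(\gamma) \cap U$ is cut out by $\overline{\gamma}$, and its Zariski tangent space at $X$ equals $\ker(d\overline{\gamma}_X)$, where $d\overline{\gamma}_X \colon T_XU \to H^2(\mathcal{O}_X)$ is the linearization of this section at the zero $X$.

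The next step is to compute $d\overline{\gamma}_X$ in terms of the Gauss-Manin connection $\nabla$. Although $\gamma$ itself is horizontal, i.e.\ $\nabla\gamma = 0$, the subbundle $F^2$ is not preserved by $\nabla$, and this failure is precisely what produces a non-trivial differential on the quotient. Choose a holomorphic local lift $\tilde{e}$ of $\gamma_0$ taking values in $F^2$ with $\tilde{e}(X) = \gamma_0$. Writing $\gamma = \tilde{e} - (\tilde{e} - \gamma)$ and projecting modulo $F^2$, the first term contributes nothing; differentiating the rest in a direction $v \in T_XU$ gives $d\overline{\gamma}_X(v) = -[\nabla_v \tilde{e}]$ in $(\mathcal{H}/F^2)_X = H^2(\mathcal{O}_X)$. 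Independence of the choice of lift $\tilde{e}$ follows from Griffiths transversality applied to the difference of two such lifts, since this difference lies in $F^2$ and vanishes at $X$.

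Finally, I would identify this differential with $\overline{\nabla}(\gamma)$ as defined in the excerpt. By construction, $\overline{\nabla}$ applied to a $(1,1)$-class is obtained by lifting to $F^2$, applying $\nabla$, and projecting to $\mathcal{H}/F^2$; this is exactly the formula for $d\overline{\gamma}_X$ up to sign, and the sign is immaterial for the kernel. Thus $T_X(\NL(\gamma)) = \ker(\overline{\nabla}(\gamma))$. The one subtle point is this identification of the two formulas for the differential, but it reduces to a standard Griffiths-transversality computation in a local holomorphic frame of the Hodge bundle and presents no serious obstacle.
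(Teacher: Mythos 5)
Your proposal is correct and is essentially the proof of the cited result (\cite[Lemma $5.16$]{v5}), which the paper itself does not reproduce: realize $\NL(\gamma)\cap U$ as the zero scheme of the section $\overline{\gamma}$ of the quotient of the Hodge bundle, identify the Zariski tangent space with the kernel of the intrinsic derivative of that section at the zero $X$, and compute that derivative as $-\overline{\nabla}(\gamma)$ using flatness of $\gamma$ and a holomorphic lift into the Hodge subbundle. Two small points: the relevant quotient is $\mathcal{H}/F^1(\mathcal{H}\otimes\mathbb{C})$, whose fibre is $H^{0,2}=H^2(\mathcal{O}_{X_Y})$ (the paper's $F^2$ is a slip, since $\mathcal{H}/F^2$ has fibre $H^{1,1}\oplus H^{0,2}$ and a nonzero real class never lies in $F^2$), and the independence of the choice of lift $\tilde{e}$ follows not from Griffiths transversality but from the elementary fact that the covariant derivative at $X$ of a section of the subbundle $F^1$ vanishing at $X$ again lies in $F^1_X$, hence dies in the quotient.
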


\begin{defi}\label{gr0}
 The boundary map $\rho$, from $H^0(\N_{X|\p3})$ to $H^1(\T_X)$ arising from the long exact sequence associated to the 
 short exact sequence:
 \begin{equation}\label{ex9}
 0 \to \T_X \to \T_{\p3}|_X \to \N_{X|\p3} \to 0
 \end{equation}
  is called the \emph{Kodaira-Spencer} map. The morphism $\overline{\nabla}(\gamma)$ is related to the Kodaira-Spencer map as follows:
\end{defi}

\begin{thm}\label{a3}
The differential map $\overline{\nabla}(\gamma)$ conincides with the following:
\[T_XU\cong H^0(\N_{X|\p3}) \xrightarrow{\rho} H^1(\T_X) \xrightarrow{\bigcup \gamma} H^2(\mo_X)\]
and under the identification $\N_{X|\p3} \cong \mo_X(d)$, $\ker(\rho) \cong J_d(F)$, where $J_d(F)$ denotes the degree $d$ graded piece of the Jacobian ideal of $X$.
\end{thm}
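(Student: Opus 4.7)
The plan is to prove the two assertions separately. For the factorisation $\ov{\nabla}(\gamma)(v)=\rho(v)\cup\gamma$ (for $v\in T_XU$), I would appeal to the classical description of Gauss--Manin differentiation along a Hodge class: for any $v\in T_XU$ with Kodaira--Spencer image $\rho(v)\in H^1(\T_X)$ and any $\eta\in H^q(\Omega^p_X)$, the component of $\nabla_v\eta$ of Hodge type $(p-1,q+1)$ equals the cup product $\rho(v)\cup\eta$ taken with respect to the interior-product pairing $\T_X\otimes\Omega^p_X\to\Omega^{p-1}_X$. Specialising to $p=q=1$ and $\eta=\gamma$, Griffiths transversality guarantees that $\nabla_v\gamma$ lies in $F^1\mc{H}\otimes\mb{C}$, so its image in $H^2(\mo_X)\cong H^{0,2}(X)$ is precisely $\rho(v)\cup\gamma$. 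This is exactly the statement of Lemma \ref{tan1} made explicit, and is proved in \cite[\S 10.1.2]{v4} (see also \cite[\S 5.3]{v5}); in a fully self-contained write-up I would verify it by picking a $\mc{C}^\infty$-trivialisation of $\mc{H}$ near $X$ and reading off the $(0,2)$ part of the derivative of a lift of $\gamma$.

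For the identification of $\ker(\rho)$, I would apply the long exact sequence in cohomology of (\ref{ex9}):
\[ H^0(\T_{\p3}|_X) \xrightarrow{\alpha} H^0(\N_{X|\p3}) \xrightarrow{\rho} H^1(\T_X), \]
so that $\ker(\rho)=\Ima(\alpha)$. The identification $\N_{X|\p3}\cong\mo_X(d)$ induced by $F$ sends the image of a local vector field $\sum g_i\,\partial/\partial x_i$ under $\T_{\p3}|_X\to\N_{X|\p3}$ to $\sum g_i\,\partial F/\partial x_i$. Restricting the Euler sequence
\[ 0\to\mo_{\p3}\to\mo_{\p3}(1)^{\oplus 4}\to\T_{\p3}\to 0 \]
to $X$ and noting that $H^1(\mo_X)=0$ (immediate from $0\to\mo_{\p3}(-d)\to\mo_{\p3}\to\mo_X\to 0$), the induced map $H^0(\mo_X(1))^{\oplus 4}\twoheadrightarrow H^0(\T_{\p3}|_X)$ is surjective. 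Therefore $\Ima(\alpha)$ coincides with the image of $H^0(\mo_X(1))^{\oplus 4}\to H^0(\mo_X(d))$ given by $(g_0,g_1,g_2,g_3)\mapsto \sum_i g_i\,\partial F/\partial x_i$, which by definition is the degree-$d$ graded piece $J_d(F)$ of the Jacobian ideal.

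The main obstacle is conceptual rather than technical and lies in the first assertion: unwinding the Gauss--Manin connection to obtain the cup-product formula is the core of the classical theory of infinitesimal variations of Hodge structure, and the sign/identification bookkeeping can be delicate. Since it is well documented, however, the bulk of the original work reduces to citing it and carrying out the short Euler-sequence diagram chase above.
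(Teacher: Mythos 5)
Your argument is correct and is essentially the standard proof from the sources the paper itself cites for this theorem (\cite[Theorem 10.21]{v4} for the cup-product description of $\ov{\nabla}(\gamma)$ via the infinitesimal variation of Hodge structure, and \cite[Lemma 6.15]{v5} for the Euler-sequence identification of $\ker(\rho)$ with $J_d(F)$); the paper gives no independent argument beyond that citation. Your outline fills in exactly those two steps in the expected way, so there is nothing further to compare.
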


\begin{proof}
 See \cite[Theorem $10.21$]{v4} and \cite[Lemma $6.15$]{v5} for a proof. 
\end{proof}

\section{Introduction to flag Hilbert schemes}

We briefly recall the basic definition of flag Hilbert schemes and its tangent space in the relevant case. See \cite[\S $4.5$]{S1}
for further details.

\begin{defi}
Given an $m$-tuple of numerical polynomials $\mathcal{P}(t)=(P_1(t),P_2(t),...,P_m(t))$, we define the  contravariant functor, called the \emph{Hilbert flag functor} 
relative to $\mathcal{P}(t)$,
\begin{eqnarray*}
FH_{\mathcal{P}(t)}:(\mbox{schemes}) &\to& \mbox{sets}\\
S &\mapsto& \left\{\begin{tabular}{l|l}
$(X_1,X_2,...,X_m)$& $X_1 \subset X_2 \subset ... \subset X_m \subset \mathbb{P}^3_S$\\
& $X_i \mbox{ are } S-\mbox{flat with Hilbert polynomial } P_i(t)$
\end{tabular}\right\}
\end{eqnarray*}
We call such an $m$-\emph{tuple a flag relative to} $\mathcal{P}(t)$. The functor is representable by a projective scheme $H_{\mc{P}(t)}$, called \emph{flag Hilbert scheme}.
\end{defi}

\begin{defi}\label{hf1}
In the case $m=2$, we have the following definition of the tangent space at a pair $(X_1,X_2)$ to the flag Hilbert scheme $H_{P_1,P_2}$:
\begin{equation}\label{dia2}
\begin{diagram}
&T_{(X_1,X_2)}H_{P_1,P_2} &\rTo &H^0(\N_{X_2|\mathbb{P}^3}) & \\
&\dTo & \square &\dTo & \\
&H^0(\N_{X_1|\mathbb{P}^3}) &\rTo &H^0(\N_{X_2|\mathbb{P}^3} \otimes \mathcal{O}_{X_1})
\end{diagram}
\end{equation} 
\end{defi}

\begin{defi}\label{n1}
We now fix certain notation for the rest of this article.
By a \emph{component} of $\NL_d$, we mean an irreducible component. By a \emph{surface} we always mean a smooth surface in $\mathbb{P}^3$.
Denote by $Q_d$ the Hilbert polynomial of degree $d$ surfaces in $\p3$. Given, a Hilbert polynomial $P$, denote by $H_P$ the corresponding Hilbert scheme and by $H_{P,Q_d}$ the corresponding flag Hilbert scheme.
\end{defi}

\section{Hodge locus and semi-regularity}

\begin{para}
In this section, we consider the case when $\gamma$ is the cohomology class of a curve $C$ in a smooth degree $d$ surface $X$ in $\p3$.
We see that the differential map $\ov{\nabla}(\gamma)$ factors through the semi-regularity map, $H^1(\N_{C|X}) \to H^2(\mo_X)$.
Finally, using this description, we compute $\ker \ov{\nabla}(\gamma)$, which is the tangent space to the Hodge locus, $\NL(\gamma)$.
 \end{para}

\begin{para}
We start with the definition of a semi-regular curve.
 Let $X$ be a surface and $C \subset X$, a curve in $X$. Since $X$ is smooth, $C$ is local complete intersection in $X$.
Denote by $i$ the closed immersion of $C$ into $X$. This gives rise to the short exact sequence:
 \begin{equation}\label{sh1}
 0 \to \mo_X \to \mo_X(C) \to \N_{C|X} \to 0.
                 \end{equation}                       
 The \emph{semi-regularity map} $\pi$ is the boundary map from $H^1(\N_{C|X})$ to $H^2(\mo_X)$.
 We say that $C$ is \emph{semi-regular} if $\pi$ is injective. 
\end{para}



\begin{para}\label{hf23}
 Let $X$ be a smooth surface and $C$ a local complete intersection curve in $X$.
 Consider the natural differential map, $d_X:\mo_X(-C)  \otimes \mo_C \to \Omega^1_X  \otimes \mo_C$.
 This yields a map $\mr{Ext}^1_C(\Omega^1_X  \otimes \mo_C,\mo_C) \to \mr{Ext}^1_C(\mo_X(-C)  \otimes \mo_C,\mo_C)$.
 Since $\Omega^1_X  \otimes \mo_C$ and $\mo_X(-C)  \otimes \mo_C$ are locally free $\mo_C$-modules, \cite[III.Ex. $6.5$]{R1} implies that 
 \[\mc{E}xt^1_C(\Omega^1_X  \otimes \mo_C,\mo_C)=0=\mc{E}xt^1_C(\mo_X(-C)  \otimes \mo_C,\mo_C).\]
 Using the local to global Ext spectral sequence, we can conclude that  
 $\mr{Ext}^1_C(\Omega^1_X  \otimes \mo_C,\mo_C)$ (resp. $\mr{Ext}^1_C(\mo_X(-C)  \otimes \mo_C,\mo_C)$) is isomorphic to 
 $H^1(\T_X  \otimes \mo_C)$ (resp. $H^1(\N_{C|X})$) where $\T_X$ is the tangent sheaf on $X$ and $\N_{C|X}$ is the 
 normal sheaf of $C$ in $X$. 
 Let $u_*:H^1(\T_X) \to H^1(\N_{C|X})$ be the composition of the restriction morphism $j_3:H^1(\T_X) \to H^1(\T_X \otimes \mo_C)$
 with the morphism $j_1:H^1(\T_X \otimes \mo_C) \to H^1(\N_{C|X})$ obtained above.
\end{para}

\begin{thm}[{\cite[Theorem $4.5, 5.5$]{fle}}]\label{hf4}
Let $C, X$ be as in \ref{hf23} and $[C]$ denote the cohomology class of $C$.
 The morphism $u_*$ defined above satisfies the following commutative diagram:
 \[\begin{diagram}
    H^1(\T_X)&\rTo^{u_*}&H^1(\N_{C|X})\\
    \dTo^{\bigcup [C]}&\ldTo^{\pi}\\
    H^2(\mo_X)
   \end{diagram}\]
   where $\bigcup [C]$ is the morphism which takes $\xi$ to $\xi \bigcup [C]$, the cup-product of $\xi$ and $[C]$.
\end{thm}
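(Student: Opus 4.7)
The plan is to translate both sides of the diagram into Yoneda products in $\Ext$ groups and reduce the theorem to an identity of extension classes. Under the identifications $H^1(\T_X) = \Ext^1(\mo_X, \T_X)$ and $[C] \in H^1(\Omega^1_X) = \Ext^1(\T_X, \mo_X)$ (via the contraction pairing $\T_X \otimes \Omega^1_X \to \mo_X$), the cup product $\xi \mapsto \xi \cup [C]$ is precisely Yoneda composition with $[C]$. On the other hand, $u_*$ is $\Ext^1(\mo_X, -)$ applied to the sheaf map $v \colon \T_X \to \N_{C|X}$ obtained by composing restriction to $C$ with the dual of the conormal map $d_X$, while the semi-regularity map $\pi$ is Yoneda multiplication with the extension class $e \in \Ext^1(\N_{C|X}, \mo_X)$ of (\ref{sh1}). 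Hence $\pi \circ u_*$ equals Yoneda with $v^* e \in \Ext^1(\T_X, \mo_X)$, and the theorem reduces to the identity $v^* e = [C]$.

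To establish this identity I would compare $v^* e$ with the Atiyah extension of the line bundle $\mo_X(C)$, whose class in $H^1(\Omega^1_X)$ is $c_1(\mo_X(C)) = [C]$. Concretely, twisting the principal parts sequence $0 \to \Omega^1_X \otimes \mo_X(C) \to P^1(\mo_X(C)) \to \mo_X(C) \to 0$ by $\mo_X(-C)$ yields an extension of $\mo_X$ by $\Omega^1_X$ representing $[C]$. The plan is then to construct a morphism of extensions from the pullback of (\ref{sh1}) along $v$ to this Atiyah-type extension, which will certify equality of their classes. The natural candidate emerges from the local picture: on an affine open $U$ where $C = V(f)$, the map $v$ sends a derivation $\partial$ to $\partial(f) \bmod f$ while the Atiyah extension is governed by $df$, and the Leibniz identity $d(fg) = f\,dg + g\,df$ is precisely what glues these local comparisons into a global morphism of extensions.

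Alternatively, the comparison can be performed directly in \v{C}ech cohomology with respect to a cover $\{U_i\}$ on which $C \cap U_i = V(f_i)$ and $\mo_X(C)$ has transition functions $g_{ij} = f_j/f_i$. A $1$-cocycle $\{\xi_{ij}\}$ representing $\xi \in H^1(\T_X)$ produces the $2$-cocycle $\{\langle \xi_{ij}, d\log g_{jk}\rangle\}$ for $\xi \cup [C]$, whereas tracing $u_*(\xi)$ through $\pi$ by lifting $v(\xi_{ij})|_C$ to $\mo_X(C)(U_{ij})$ and measuring the failure to descend to $\mo_X$ yields a second $2$-cocycle, and the two differ by a coboundary, again via Leibniz. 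The main obstacle in either approach is the bookkeeping needed to identify the two extension classes (equivalently, the two \v{C}ech cocycles) despite their apparently different origins; this is the technical heart of Flenner's result. Once the identity $v^* e = [C]$ is in hand, commutativity of the diagram follows formally from functoriality of Yoneda products.
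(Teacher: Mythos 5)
Your proposal is correct, and it supplies an actual argument where the paper gives none: the paper proves nothing here, deferring entirely to Flenner (and implicitly to Bloch's semi-regularity paper). Your route --- reading $\cup[C]$ as Yoneda composition with $[C]\in\Ext^1(\T_X,\mo_X)$, reading $\pi\circ u_*$ as Yoneda composition with the pullback $v^*e$ of the extension class of (\ref{sh1}) along $v:\T_X\to\N_{C|X}$, and reducing to the identity $v^*e=[C]$ --- is exactly the divisor case of the cited results, stripped of the cotangent-complex machinery Flenner needs for arbitrary l.c.i.\ subschemes; what it buys is a short, self-contained proof in the only case the paper uses. The \v{C}ech verification you sketch does close: with $C\cap U_i=V(f_i)$ and $g_{jk}=f_j/f_k$, the map $v$ sends $\xi_{ij}$ to $\xi_{ij}(f_\bullet)\cdot(1/f_\bullet)|_C$, the lift to $\mo_X(C)(U_{ij})$ is $\xi_{ij}(f_j)/f_j$, and the \v{C}ech differential collapses (using the cocycle relation for $\xi$ and Leibniz applied to $f_j=g_{jk}f_k$) to $\xi_{ij}(g_{jk})/g_{jk}=\langle\xi_{ij},d\log g_{jk}\rangle$, which is precisely the cocycle for $\xi\cup c_1(\mo_X(C))=\xi\cup[C]$. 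Two small points to make explicit in a write-up: the $\Ext$ groups over $C$ and over $X$ are identified through the affine pushforward $i_*$ (harmless, but it is where $\pi$ and $u_*$ actually live), and the equality of the topological class $[C]\in H^{1,1}$ with the $d\log$-cocycle class $c_1(\mo_X(C))$ is the standard exponential-sequence compatibility and should be cited rather than treated as a definition. Signs depend on conventions for connecting maps and cup products but do not affect commutativity of the triangle up to the stated identification.
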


\begin{rem}
 One can see that $u_*$ is the obstruction map in the sense, for $\xi \in H^1(\T_X)$, $u_*(\xi)=0$ if and only if $C$ lifts to a local complete intersection in the infinitesimal deformation of $\xi$ (see \cite[Proposition $2.6$]{b1}).
 We will now see a diagram (in Theorem \ref{hf12}) which illustrates this fact more clearly.
\end{rem}

\begin{para}
Recall, the following short exact sequence of normal sheaves:
 \begin{equation}\label{sh2}
  0 \to \N_{C|X} \to \N_{C|\p3} \to \N_{X|\p3}  \otimes \mo_C \to 0
 \end{equation}
 which arises from the short exact sequence:
 \begin{equation}\label{ex0c}
 0 \to \I_X \to \I_C \xrightarrow{j^\#} j_*\mo_X(-C) \to 0.
 \end{equation}
after applying $\Hc_{\p3}(-,j_{0_*} \mo_C)$, where $j_0$ is the closed immersion of $C$ into $\p3$.
\end{para}

The following lemma will be used in the proof of Theorem \ref{hf12} below.

\begin{lem}\label{de1}
Given $j:C \hookrightarrow X$ the closed immersion, 
 the following diagrams are commutative and the horizontal rows are exact:
 \[\begin{diagram}
    0 &\rTo &\I_X  \otimes \mo_C &\rTo &\I_C  \otimes \mo_C &\rTo^u &\mo_X(-C) \otimes \mo_C &\rTo&0\\
    & &\dTo^{\cong}&\circlearrowright&\dInto^{d_{\p3}}&\circlearrowright&\dInto^{d_X}& \\
    0 &\rTo &\I_X  \otimes \mo_C &\rTo^{d_{\p3}} &\Omega^1_{{\p3}}  \otimes \mo_C &\rTo^v &\Omega^1_X  \otimes \mo_C &\rTo &0
   \end{diagram}\]
   where $d_{\p3}$ (resp. $d_X$) are the K\"{a}hler differential operator, $u$ is induced by $j^\#$ and $v$ 
   maps $d_{\p3}f \otimes g$ to $gd_Xj^{\#}(f)$ on open sets.
 \end{lem}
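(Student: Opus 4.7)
The plan is to verify the three claims of the lemma in turn: exactness of the two rows, well-definedness of the vertical maps, and commutativity of the two squares. The bottom row I would recognize as the pullback to $C$ of the conormal sequence of the smooth hypersurface $X \subset \p3$,
\[
0 \to \I_X/\I_X^2 \to \Omega^1_{\p3}\otimes \mo_X \to \Omega^1_X \to 0,
\]
which is exact because $X$ is smooth in $\p3$. Tensoring with $\mo_C$ over $\mo_X$ preserves exactness because $\Omega^1_X$ is locally free on $X$, and the identification $\I_X \otimes_{\mo_{\p3}}\mo_C \cong (\I_X/\I_X^2)\otimes_{\mo_X}\mo_C$ is automatic since $\I_X \cong \mo_{\p3}(-d)$ is already a line bundle.

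For the top row, I would tensor the sheaf exact sequence \eqref{ex0c} with $\mo_C$ over $\mo_{\p3}$. Right-exactness is automatic, and the middle term becomes $\I_C/\I_C^2$. The only nontrivial point is injectivity of the induced map $\I_X \otimes \mo_C \to \I_C/\I_C^2$, which I would check locally at a point $x \in C$: setting $R = \mo_{\p3,x}$, $\I_{X,x} = (f)$, $\I_{C,x} = (f,g')$, the regularity of the sequence $(f,g')$ (which holds because $C$ is a local complete intersection in $\p3$) makes $\I_X \otimes \mo_C$ a free rank-one $\mo_C$-module on the class of $f$ and $\I_C/\I_C^2$ a free rank-two $\mo_C$-module on the classes of $f,g'$; the map sends the generator to the first basis vector and is therefore injective.

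For the vertical maps and the commutativity, the left column is the identity, so the left square commutes tautologically. The middle and right verticals are induced from the universal K\"ahler derivations $d_{\p3}: \I_C \to \Omega^1_{\p3}$ and $d_X:\mo_X(-C)=\I_{C/X}\to \Omega^1_X$, which descend to the tensored objects because $d(\I^2)\subset \I\cdot\Omega^1$. Commutativity of the right square reduces, after unwinding the explicit formula $v(df\otimes g)=g\,d_X j^{\#}(f)$ given in the lemma statement, to the functoriality of the exterior derivative under the quotient $\mo_{\p3}\twoheadrightarrow \mo_X$: on an element $f\otimes c$ with $f\in\I_C$, both $d_X\circ u$ and $v\circ d_{\p3}$ deliver $d_X(f|_X)\otimes c$.

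The hardest step will be the left-exactness of the top row. A naive flatness argument does not apply, because $\mo_C$ is not $\mo_{\p3}$-flat; indeed $\mathrm{Tor}_1^{\mo_{\p3}}(j_{0*}\mo_X(-C),\mo_C)$ is typically nonzero, so injectivity of $\I_X \otimes \mo_C \to \I_C/\I_C^2$ cannot be obtained from a simple homological vanishing but must be extracted from the explicit regular-sequence description of the l.c.i.\ ideal $\I_C$ above.
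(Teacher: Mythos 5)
Your proposal is correct and follows essentially the same route as the paper: the only substantive point in both is the left-exactness of the top row, which you and the paper each reduce to a stalk computation using the local regular sequence $(f,g)$ generating $\I_{C,x}$ with one member generating $\I_{X,x}$. Where the paper verifies the containment $\I_{X,x}\cap\I_{C,x}^2\subset\I_{X,x}\I_{C,x}$ by a direct ideal-membership check, you invoke the equivalent standard fact that $\I_{C,x}/\I_{C,x}^2$ is free over $\mo_{C,x}$ on the classes of the regular sequence; this is a cosmetic, not a structural, difference.
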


 \begin{proof}
The exactness of the lower horizontal sequence is explained in \cite[Theorem $25.2$]{mat}. 
We now show the exactness of the upper horizontal sequence. The sequence is obtained via pulling back (\ref{ex0c}) 
by the closed immersion of $C$ into $\p3$.
Since tensor product is right exact, it remains to prove that the morphim from $\I_X  \otimes \mo_C$ to $\I_C  \otimes \mo_C$
is injective. It suffices to prove this statement on the stalk  for all $x \in C$, closed point. Note that,
$\I_{X,x}  \otimes \mo_{C,x}$ (resp. $\I_{C,x}  \otimes \mo_{C,x}$) is isomorphic to $\I_{X,x}/\I_{X,x}\I_{C,x}$ (resp.
$\I_{C,x}/\I_{C,x}^2$). So, we need to show that $\I_{X,x} \cap \I_{C,x}^2$
is contained in $\I_{X,x}.\I_{C,x}$.

Since $C$ is a local complete intersection curve, $\I_{C,x}$ is generated by a regular sequence, say $(f_x, g_x)$ 
 where $g_x$ generates the ideal $\I_{X,x}$. So, $\I_{X,x}.\I_{C,x}=(g_x^2,f_xg_x), \I_{C,x}^2=(f_x^2,f_xg_x,g_x^2)$. Note that any element in $(g_x) \cap (f_x^2,f_xg_x,g_x^2)$ is divisible by $f_x^2$
modulo the ideal $(g_x^2,f_xg_x)$. Therefore it is divisible by $g_xf_x^2$, hence is an element in $\I_{X,x}.\I_{C,x}$. It directly follows that the natural morphism from 
$\I_{X,x}/\I_{X,x}.\I_{C,x}$ to $\I_{C,x}/\I_{C,x}^2$ is injective.


The commutativity of the diagrams follows directly from the description of the relevant morphisms.
\end{proof}


\begin{thm}\label{hf12}
Let $X$ be a smooth surface, $C \subset X$ and $\gamma=[C] \in H^{1,1}(X,\mb{Z})$.
 We then have the following commutative diagram
 \[\begin{diagram}
  & &  & &T_{(C,X)}H_{P,Q_d}&\rTo&H^0(X,\N_{X|\p3})&\rTo^{\ov{\nabla}(\gamma)}&H^2(X,\mo_X)\\
  & &  & &\dTo&\square&\dTo^{\rho_C}&\circlearrowright&\uTo^{\pi_C}& \\
    0&\rTo&H^0(C,\N_{C|X})&\rTo^{\phi_C}&H^0(C,\N_{C|\p3})&\rTo^{\beta_C}&H^0(C,\N_{X|\p3} \otimes \mo_C)&\rTo^{\delta_C}&H^1(C,\N_{C|X}) 
       \end{diagram}\]
where the horizontal exact sequence comes from (\ref{sh2}), $\pi_C$ is the semi-regularity map and $\rho_C$ is the natural
pull-back morphism.
\end{thm}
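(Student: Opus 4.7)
The diagram encodes three assertions: exactness of the bottom row, commutativity of the left square $\square$, and commutativity of the right region $\circlearrowright$. The plan is to handle these in turn, with the bulk of the work going into the last.

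The bottom row is immediate: it is just the long exact cohomology sequence associated to (\ref{sh2}), starting at $H^0(\N_{C|X})$ and truncated after $H^1(\N_{C|X})$. For the square $\square$, the fibre-product description of Definition \ref{hf1} says exactly that the two compositions $T_{(C,X)}H_{P,Q_d} \to H^0(\N_{X|\p3}) \xrightarrow{\rho_C} H^0(\N_{X|\p3} \otimes \mo_C)$ and $T_{(C,X)}H_{P,Q_d} \to H^0(\N_{C|\p3}) \xrightarrow{\beta_C} H^0(\N_{X|\p3} \otimes \mo_C)$ coincide, once one checks that the maps in (\ref{dia2}) specialize to $\rho_C$ (pullback to $C$) and $\beta_C$ (the $H^0$ of the second map in (\ref{sh2})).

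The heart of the argument is the identity $\ov{\nabla}(\gamma) = \pi_C \circ \delta_C \circ \rho_C$. By Theorem \ref{a3}, $\ov{\nabla}(\gamma)$ factors as $H^0(\N_{X|\p3}) \xrightarrow{\rho} H^1(\T_X) \xrightarrow{\cup \gamma} H^2(\mo_X)$ with $\rho$ the Kodaira-Spencer map, and by Theorem \ref{hf4} the cup product factors further as $\pi_C \circ u_* = \pi_C \circ j_1 \circ j_3$. So the problem reduces to proving the equality $j_1 \circ j_3 \circ \rho = \delta_C \circ \rho_C$ as maps from $H^0(\N_{X|\p3})$ to $H^1(\N_{C|X})$.

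To prove this equality I would dualise the diagram of Lemma \ref{de1} by applying $\Hc_C(-,\mo_C)$. Because every entry in that diagram is locally free over $\mo_C$, the result is a commutative diagram of short exact sequences whose top row is the restriction to $C$ of the normal sequence $0 \to \T_X \to \T_{\p3}|_X \to \N_{X|\p3} \to 0$, whose bottom row is (\ref{sh2}), whose rightmost vertical arrow is the identity on $\N_{X|\p3} \otimes \mo_C$, and whose leftmost vertical arrow is a natural map $\T_X \otimes \mo_C \to \N_{C|X}$ that induces $j_1$ on $H^1$. Naturality of the connecting homomorphism applied to this morphism of short exact sequences gives $\delta_C = j_1 \circ \tilde{\delta}$, where $\tilde{\delta} \colon H^0(\N_{X|\p3} \otimes \mo_C) \to H^1(\T_X \otimes \mo_C)$ is the connecting map of the top row. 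A second application of naturality, now to the restriction to $C$ of the normal sequence of $X$ in $\p3$, yields $\tilde{\delta} \circ \rho_C = j_3 \circ \rho$. Composing the two identities gives $\delta_C \circ \rho_C = j_1 \circ j_3 \circ \rho$, as required. The main obstacle is the bookkeeping: one must verify that the leftmost vertical arrow produced by dualising Lemma \ref{de1} really induces the map $j_1$ defined via the local-to-global Ext spectral sequence in paragraph \ref{hf23}. This is a local computation using that $C$ is a local complete intersection in $X$ together with the explicit descriptions of $d_X$ and $d_{\p3}$ recorded in Lemma \ref{de1}.
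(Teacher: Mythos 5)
Your proposal is correct and follows essentially the same route as the paper: reduce to $\ov{\nabla}(\gamma)=\pi_C\circ\delta_C\circ\rho_C$ via Theorems \ref{a3} and \ref{hf4}, introduce the connecting map $H^0(\N_{X|\p3}\otimes\mo_C)\to H^1(\T_X\otimes\mo_C)$ of the restricted normal sequence (your $\tilde{\delta}$ is the paper's $j_2$), and obtain the two required identities by naturality of connecting homomorphisms applied to the restriction diagram (\ref{com6}) and to the $\Hc_C(-,\mo_C)$-dual of Lemma \ref{de1}. The bookkeeping point you flag, that the dualised left vertical arrow induces $j_1$, is exactly the content of the paper's identification via the local-to-global Ext spectral sequence in \ref{hf23}.
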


\begin{proof}
The only thing left to prove is that $\ov{\nabla}(\gamma)$ is the same as $\pi_C \circ \delta_C \circ \rho_C$.
Using Theorems \ref{a3} and \ref{hf4}, we have that $\ov{\nabla}(\gamma)$ factors as:
\[H^0(\N_{X|\p3}) \xrightarrow{\rho} H^1(\T_X) \xrightarrow{u_*} H^1(\N_{C|X}) \xrightarrow{\pi_C} H^1(\mo_X).\]
Hence it suffices to show that $u_* \circ \rho$ is the same as $\delta_C \circ \rho_C$.

Recall,  under the notations as in \ref{hf23} we can factor $u_*$ as $j_1 \circ j_3$. Hence, it suffices to construct
a morphism $j_2:H^0(\N_{X|\p3} \otimes \mo_C) \to H^1(\T_X \otimes \mo_C)$ such that following two diagrams commute:
\begin{equation}\label{dc1}
\begin{diagram}
   H^0(\N_{X|\p3})&\rTo^{\rho}&H^1(\T_X)&\rTo^{j_3}&H^1(\T_X \otimes \mo_C)&\rTo^{j_1}&H^1(\N_{C|X})\\
   &\rdTo^{\rho_C} & &\ruDashto^{j_2}& &\ruTo(4,2)^{\delta_C}\\
   &  &H^0(\N_{X|\p3} \otimes \mo_C)
  \end{diagram}
  \end{equation}

We define $j_2$ in the following way: Take the following commutative diagram of short exact sequences:
\begin{equation}\label{com6}
\begin{diagram}
0 &\rTo &\T_X &\rTo &\T_{\p3} \otimes \mo_X &\rTo &\N_{X|\p3} &\rTo &0\\
& &\dTo&\circlearrowright&\dTo&\circlearrowright&\dTo\\
0 &\rTo &\T_X \otimes \mo_C &\rTo &\T_{\p3} \otimes \mo_C &\rTo &\N_{X|\p3} \otimes \mo_C&\rTo &0
\end{diagram}
\end{equation}
Then $j_2$ arises from the associated long exact sequence:
 \[\begin{diagram}
   H^0(\N_{X|\p3})& \rTo^{\rho} &H^1(X,\T_X)\\
   \dTo^{\rho_C}&\circlearrowright &\dTo^{j_3}\\
   H^0(\N_{X|\p3}  \otimes \mo_C)& \rTo^{j_2} &H^1(\T_X  \otimes \mo_C)
  \end{diagram}\]
  where the maps (other than $j_2$) is the same as defined above. This gives the commutativity of the first
  square in the diagram (\ref{dc1}).
  
  We now prove the commutativity of the second diagram.
Since the terms in the short exact sequences in Lemma \ref{de1} are locally free $\mo_C$-modules, we get the 
dual diagram of short exact sequence by applying $\Hc_C(-,\mo_C)$ to it. This gives us the following: 
\[\begin{diagram}
0 &\rTo &\N_{C|X} &\rTo &\N_{C|\p3} &\rTo &\N_{X|\p3} \otimes \mo_C &\rTo &0\\
& &\uTo&\circlearrowright&\uTo&\circlearrowright&\uTo^{\cong}\\
0 &\rTo &\T_X \otimes \mo_C &\rTo &\T_{\p3} \otimes \mo_C &\rTo &\N_{X|\p3} \otimes \mo_C&\rTo &0
\end{diagram}\]
where the bottom short exact sequence is the same as in (\ref{com6}).
Taking the associated long exact sequence,
we get 
 \[\begin{diagram}
    & &H^0(\N_{X|\p3} \otimes \mo_C)\\
    & \ldTo^{j_2} & \dTo^{\circlearrowright\, \, \,}_{\delta_C}\\
    H^1(\T_X \otimes \mo_C)&\rTo^{j_1}&H^1(\N_{C|X})
   \end{diagram}\]
The theorem follows.
\end{proof}

\begin{cor}\label{hf12c}
 Let $X$ be a smooth degree $d$ surface in $\p3$, $C \subset X$ be a curve with Hilbert polynomial, say $P$ and $[C]$ its corresponding cohomology class. 
Then, the tangent space, \[T_X(\NL([C])) \subset \rho_C^{-1}(\Ima \beta_C)= \pr_2T_{(C,X)}H_{P,Q_d}.\]
Furthermore, if $C$ is semi-regular then we have equality
$T_X(\NL([C]))=\pr_2T_{(C,X)}H_{P,Q_d}$.
\end{cor}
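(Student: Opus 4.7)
The plan is to deploy Theorem \ref{hf12} together with the tangent-space identification of Lemma \ref{tan1} to translate both sides of the claimed chain into kernels of linear maps out of $H^0(\N_{X|\p3})$, and then to use the long exact sequence attached to (\ref{sh2}) to match them up.

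First I would unpack the identity $\rho_C^{-1}(\Ima \beta_C) = \pr_2 T_{(C,X)} H_{P,Q_d}$. By Definition \ref{hf1}, $T_{(C,X)} H_{P,Q_d}$ is the fiber product of $H^0(\N_{C|\p3})$ and $H^0(\N_{X|\p3})$ over $H^0(\N_{X|\p3} \otimes \mo_C)$ with respect to the maps $\beta_C$ and $\rho_C$. So an element of $H^0(\N_{X|\p3})$ lies in $\pr_2 T_{(C,X)} H_{P,Q_d}$ exactly when its image under $\rho_C$ lies in $\Ima \beta_C$. Exactness of the long exact sequence attached to (\ref{sh2}) gives $\Ima \beta_C = \ker \delta_C$, so this is simply $\rho_C^{-1}(\ker \delta_C) = \ker(\delta_C \circ \rho_C)$.

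Second, by Lemma \ref{tan1}, $T_X(\NL([C])) = \ker \ov{\nabla}([C])$, and Theorem \ref{hf12} factors $\ov{\nabla}([C])$ as $\pi_C \circ \delta_C \circ \rho_C$. Both sides of the claimed inclusion are therefore kernels of composed linear maps on $H^0(\N_{X|\p3})$: the right-hand side is $\ker(\delta_C \circ \rho_C)$ and the left-hand side is $\ker(\pi_C \circ \delta_C \circ \rho_C)$. The two coincide precisely when $\pi_C$ is injective on $\Ima(\delta_C \circ \rho_C) \subseteq H^1(\N_{C|X})$, and in general they differ only by this image.

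The main obstacle, and the step that determines the logical structure of the corollary, is precisely this injectivity question. Semi-regularity of $C$ is by definition injectivity of $\pi_C$ on all of $H^1(\N_{C|X})$, which trivially implies injectivity on the subspace $\Ima(\delta_C \circ \rho_C)$; the two kernels then collapse, producing the equality asserted in the \emph{furthermore} clause. Without semi-regularity, the diagram of Theorem \ref{hf12} a priori only supplies the tautological containment $\ker(\delta_C \circ \rho_C) \subseteq \ker(\pi_C \circ \delta_C \circ \rho_C)$, so pinning down the unconditional direction as stated—namely $T_X(\NL([C])) \subseteq \pr_2 T_{(C,X)} H_{P,Q_d}$—would require either invoking semi-regularity throughout or supplementing the diagram chase with an independent reason for $\pi_C$ to vanish on the image of $\delta_C \circ \rho_C$.
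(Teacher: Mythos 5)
Your translation of both sides into kernels is exactly the paper's own argument: by Lemma \ref{tan1} and Theorem \ref{hf12} one has $T_X(\NL([C]))=\ker(\pi_C\circ\delta_C\circ\rho_C)$, while Definition \ref{hf1} (the fiber-product description) and exactness of the sequence coming from (\ref{sh2}) give $\pr_2 T_{(C,X)}H_{P,Q_d}=\rho_C^{-1}(\Ima\beta_C)=\ker(\delta_C\circ\rho_C)$, and semi-regularity (injectivity of $\pi_C$) collapses the two kernels. Your closing worry is also justified, and it points to a flaw in the \emph{statement} rather than in your argument: the containment that ``follows directly from the diagram'' is the tautological $\ker(\delta_C\circ\rho_C)\subseteq\ker(\pi_C\circ\delta_C\circ\rho_C)$, i.e. $\pr_2 T_{(C,X)}H_{P,Q_d}\subseteq T_X(\NL([C]))$, so the inclusion sign in the corollary as printed is reversed (geometrically this is the expected direction: a first-order deformation of the pair $(C,X)$ keeps $[C]$ a Hodge class). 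That this is the intended unconditional statement is confirmed by the paper's own later use: in Theorem \ref{a71} 2(b), where $C$ is not known to be semi-regular, the corollary is invoked to obtain the lower bound $\dim T_X(\NL(\gamma))\ge \dim I_d(C)-1+h^0(\N_{C|\p3})$, which is precisely the containment $\pr_2 T_{(C,X)}H_{P,Q_d}\subseteq T_X(\NL(\gamma))$; the other applications (Proposition \ref{hf21}, Theorem \ref{a4}) use only the semi-regular equality, which you prove correctly. One small slip in your final sentence: the condition that would force the printed direction is injectivity of $\pi_C$ on $\Ima(\delta_C\circ\rho_C)$, i.e. $\ker\pi_C\cap\Ima(\delta_C\circ\rho_C)=0$, not vanishing of $\pi_C$ there; if $\pi_C$ vanished on that image then $T_X(\NL([C]))$ would be all of $H^0(\N_{X|\p3})$, making the printed inclusion fail even more badly.
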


\begin{proof}
 The first statement follows directly from the diagram in Theorem \ref{hf12}.
Now, if $C$ is semi-regular then $\pi_C$ (as in the diagram) is injective. Hence, $\ker \overline{\nabla}([C])=\ker(\delta_C \circ \rho_C)=\rho_C^{-1}(\Ima \beta_C)$. The corollary then follows. 
\end{proof}

  \begin{cor}\label{dim4}
  Notations as in Theorem \ref{hf12}.
   The kernel of $\rho_C$ is isomorphic to $H^0(\mo_X(-C)(d))$ and $\rho_C$ is surjective if and only if $H^1(\mo_X(-C)(d))=0$.
   Moreover, if $H^1(\mo_X(-C)(d))=0$ then $\pr_1(T_{(C,X)}H_{P,Q_d})=H^0(\N_{C|\p3})$.
  \end{cor}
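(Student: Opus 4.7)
My plan is to exploit the identification $\N_{X|\p3}\cong\mo_X(d)$ (valid since $X$ is a degree-$d$ hypersurface in $\p3$) and rewrite the restriction map $\rho_C$ as the restriction of sections $H^0(\mo_X(d))\to H^0(\mo_X(d)\otimes\mo_C)$. The twist of the structure-sheaf sequence
\[0\to\mo_X(-C)\to\mo_X\to\mo_C\to 0\]
by $\mo_X(d)$ produces the short exact sequence
\[0\to\mo_X(-C)(d)\to\mo_X(d)\to\mo_X(d)\otimes\mo_C\to 0,\]
whose associated long exact cohomology sequence reads
\[0\to H^0(\mo_X(-C)(d))\to H^0(\mo_X(d))\xrightarrow{\rho_C}H^0(\mo_X(d)\otimes\mo_C)\to H^1(\mo_X(-C)(d))\to\cdots\]
From this, reading off the kernel of $\rho_C$ and the cokernel's vanishing will establish the first two assertions immediately.

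For the last assertion, I want to use the fiber-product description of the tangent space from Definition \ref{hf1} applied to the flag Hilbert scheme $H_{P,Q_d}$: $T_{(C,X)}H_{P,Q_d}$ is the fiber product of $H^0(\N_{X|\p3})$ and $H^0(\N_{C|\p3})$ over $H^0(\N_{X|\p3}\otimes\mo_C)$ via the maps $\rho_C$ and $\beta_C$ respectively. Consequently, an element $\sigma\in H^0(\N_{C|\p3})$ lies in $\pr_1(T_{(C,X)}H_{P,Q_d})$ if and only if $\beta_C(\sigma)\in\Ima\rho_C$. Under the hypothesis $H^1(\mo_X(-C)(d))=0$ I have just shown $\rho_C$ is surjective, so $\Ima\rho_C=H^0(\N_{X|\p3}\otimes\mo_C)$ contains every $\beta_C(\sigma)$; thus $\pr_1$ is surjective onto $H^0(\N_{C|\p3})$, as claimed.

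There is no real obstacle here beyond chasing the definitions; the only points requiring care are (i) producing the correct twist of the short exact sequence so that $\rho_C$ coincides with the restriction map on sections of $\mo_X(d)$ under the identification $\N_{X|\p3}\cong\mo_X(d)$, and (ii) being precise about which projection is $\pr_1$ in the fiber-product diagram defining $T_{(C,X)}H_{P,Q_d}$. Once these identifications are in place, both statements reduce to elementary diagram chases.
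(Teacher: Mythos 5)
Your argument is correct and is essentially the paper's own proof: the same twisted ideal-sheaf sequence $0\to\mo_X(-C)(d)\to\mo_X(d)\to\mo_X(d)\otimes\mo_C\to 0$ for the first two claims, and the fiber-product description of $T_{(C,X)}H_{P,Q_d}$ from Definition \ref{hf1} for the last. The only point you gloss over is the ``only if'' direction of the equivalence, where surjectivity of $\rho_C$ alone only gives $H^1(\mo_X(-C)(d))\hookrightarrow H^1(\mo_X(d))$; you also need the standard vanishing $H^1(\mo_X(d))=0$ for a smooth hypersurface $X\subset\p3$, which the paper invokes explicitly.
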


  \begin{proof}
   Since $\N_{X|\p3} \cong \mo_X(d)$ the first statement follows from the short exact sequence,
   \begin{equation}
 0 \to \mo_X(-C)(d) \to \mo_X(d) \to i_*\mo_C(d) \to 0
\end{equation}
and the fact that $H^1(\mo_X(d))=0$,
where $i$ is the closed immersion of $C$ into $X$. 

The last statement follows directly from the definition of $T_{(C,X)}H_{P,Q_d}$ given in \ref{hf1}.
     \end{proof}


\section{Criterion for Non-reducedness}

\begin{para}
In this section we demonstrate the relation between Hodge locus and certain flag Hilbert schemes.
Using this we give a criterion for non-reducedness of components of the Noether-Lefschetz locus.
\end{para}

 We first prove a result that would help
 us determine when a curve is semi-regular.

\begin{lem}\label{hf11}
Let $C$ be a connected reduced curve and $X$ a smooth degree $d$ surface containing $C$. Then, $H^1(\mo_X(-C)(k))=0$ for all $k \ge \deg(C)$. In particular, of $d \ge \deg(C)+4$ then $h^1(\mathcal{O}_X(C))=0$, hence $C$ is semi-regular.
\end{lem}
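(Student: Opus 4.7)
The plan is to reduce the statement on $X$ to a statement about the ideal sheaf of $C$ in $\p3$, and then to apply a classical regularity bound for connected reduced curves. First I consider the inclusion $\I_X \subset \I_C$ on $\p3$ (coming from $C \subset X$), whose quotient is the pushforward of $\mo_X(-C)$ to $\p3$. Twisting the resulting short exact sequence by $\mo_{\p3}(k)$ gives
\[0 \to \mo_{\p3}(k-d) \to \I_C(k) \to \mo_X(-C)(k) \to 0,\]
and since $H^i(\p3, \mo_{\p3}(m)) = 0$ for $0 < i < 3$ and any $m$, the long exact sequence yields the isomorphism $H^1(X, \mo_X(-C)(k)) \cong H^1(\p3, \I_C(k))$.

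Next I combine this with the ideal sheaf sequence $0 \to \I_C(k) \to \mo_{\p3}(k) \to \mo_C(k) \to 0$ to identify $H^1(\I_C(k))$ with the cokernel of the restriction map $H^0(\p3, \mo_{\p3}(k)) \to H^0(C, \mo_C(k))$. The first assertion thus reduces to showing this restriction is surjective for $k \geq \deg(C)$, which is a Castelnuovo--Mumford regularity statement about $C$. When $C$ is contained in a hyperplane $H$, this is immediate from the sequence $0 \to \mo_{\p3}(-1) \to \I_C \to \mo_H(-\deg C) \to 0$ together with the vanishing of intermediate cohomology of line bundles on $\p3$ and $\mathbb{P}^2$. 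When $C$ is non-degenerate I would intersect with a general hyperplane to reduce to a zero-dimensional scheme $\Gamma$ of $\deg(C)$ points in $\mathbb{P}^2$, use the standard fact that $H^1(\mathbb{P}^2, \I_\Gamma(k)) = 0$ for $k \geq \deg(C)-1$, and propagate back to $\p3$ via the hyperplane section sequence $0 \to \I_C(k-1) \to \I_C(k) \to \I_\Gamma(k) \to 0$ combined with Serre vanishing in high degrees. This regularity step, especially in the non-degenerate case, is the main technical obstacle.

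For the ``in particular'' statement, I invoke Serre duality on the smooth surface $X$ together with the adjunction formula $\omega_X \cong \mo_X(d-4)$:
\[H^1(X, \mo_X(C))^\vee \cong H^1(X, \omega_X \otimes \mo_X(-C)) \cong H^1(X, \mo_X(-C)(d-4)).\]
The hypothesis $d \geq \deg(C) + 4$ translates into $d-4 \geq \deg(C)$, so the first part of the lemma forces $H^1(\mo_X(C)) = 0$.

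Finally, to conclude semi-regularity, I apply the long exact sequence of
\[0 \to \mo_X \to \mo_X(C) \to \N_{C|X} \to 0,\]
in which the semi-regularity map $\pi \colon H^1(\N_{C|X}) \to H^2(\mo_X)$ is the connecting homomorphism. The kernel of $\pi$ coincides with the image of $H^1(\mo_X(C)) \to H^1(\N_{C|X})$, which vanishes by the previous step; hence $\pi$ is injective and $C$ is semi-regular.
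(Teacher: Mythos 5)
Your overall architecture is the same as the paper's: you pass from $\mo_X(-C)(k)$ to $\I_C(k)$ via $0 \to \I_X \to \I_C \to \mo_X(-C) \to 0$ and the vanishing of intermediate cohomology of line bundles on $\p3$, you get the ``in particular'' statement from Serre duality with $\omega_X \cong \mo_X(d-4)$, and you deduce semi-regularity from the long exact sequence of $0 \to \mo_X \to \mo_X(C) \to \N_{C|X} \to 0$. All of that is correct and matches the paper.

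The divergence, and the gap, is in how you obtain $H^1(\I_C(k))=0$ for $k \ge \deg(C)$. The paper does not prove this; it quotes it as a known regularity theorem (the Main Theorem of \cite{gi1}: the ideal sheaf of a connected reduced curve in $\p3$ is $\deg(C)$-regular, the Gruson--Lazarsfeld--Peskine bound extended to connected reduced curves). Your attempt to derive it in the non-degenerate case by hyperplane sections does not close. From $0 \to \I_C(k-1) \to \I_C(k) \to \I_\Gamma(k) \to 0$ and $H^1(\I_\Gamma(k))=0$ for $k \ge \deg(C)-1$ you only get that $H^1(\I_C(k-1)) \to H^1(\I_C(k))$ is \emph{surjective} in that range, i.e.\ $h^1(\I_C(k))$ is non-increasing; together with Serre vanishing this says the $h^1$ eventually reaches $0$, but it does not force $h^1(\I_C(\deg C))=0$ --- the dimension could stay positive for a while before dropping. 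To convert surjectivity into vanishing at a specified twist you would also need injectivity, i.e.\ surjectivity of $H^0(\I_C(k)) \to H^0(\I_\Gamma(k))$, and establishing that is essentially the content of the regularity theorem itself (it is where the uniform position principle and the Eagon--Northcott arguments of Gruson--Lazarsfeld--Peskine enter, and where Giaimo's extension to connected reduced curves is needed). Your degenerate (plane curve) case is fine, but for the general case you should invoke the cited theorem rather than try to reprove it by descending induction.
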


\begin{proof}
Since $X$ is a hypersurface in $\p3$ of degree $d$, $\I_X \cong \mo_{\p3}(-d)$.  Consider the short exact sequence:
\[0 \to \I_X \to \I_C \to \mo_X(-C) \to 0.\]
We get the following terms in the associated long exact sequence:
\[... \to H^1(\I_C(k)) \to H^1(\mo_X(-C)(k)) \to H^2(\I_X(k)) \to ...\]
Now, $H^2(\mo_{\p3}(k-d))=0$ (see \cite[Theorem $5.1$]{R1}). Now, $\I_{C}$ is $\deg(C)$-regular (see 
\cite[Main Theorem]{gi1}). 
So, $H^1(\I_C(k))=0$ for $k \ge \deg(C)$. This implies $H^1(\mo_X(-C)(k))=0$ for $k \ge \deg(C)$. 
By Serre duality, $0=H^1(\mo_X(-C)(d-4)) \cong H^1(\mo_X(C))$. So, $C$ is semi-regular.
\end{proof}

Recall the following result,
\begin{lem}\label{a4e}
Let $d \ge 5$ and $C$ be an effective divisor on a smooth degree $d$ surface $X$ of the form $\sum_i a_iC_i$ where $C_i$ are integral curves with $\deg(C)+2 \le d$.
Then, $h^0(\N_{C|X})=0.$ In particular, $\dim |C|=0$ where $|C|$ is the linear system associated to $C$.
\end{lem}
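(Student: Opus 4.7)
The plan is to reduce both conclusions to the single inequality $h^0(\mo_X(C)) \le 1$. The short exact sequence $0 \to \mo_X \to \mo_X(C) \to \N_{C|X} \to 0$ together with the vanishing $h^1(\mo_X) = 0$ (which holds for any smooth hypersurface in $\p3$) gives $h^0(\mo_X(C)) = 1 + h^0(\N_{C|X})$. Since $C$ is effective one has $h^0(\mo_X(C)) \ge 1$, and $\dim|C| = h^0(\mo_X(C)) - 1$, so once $h^0(\mo_X(C)) \le 1$ is established, both $h^0(\N_{C|X}) = 0$ and $\dim|C| = 0$ follow simultaneously.

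To bound $h^0(\mo_X(C))$, I would cut with a generic hyperplane. Let $P \subset \p3$ be a generic plane and set $H := X \cap P$; by Bertini $H$ is a smooth plane curve of degree $d$ in $P \cong \mb{P}^2$. Since $\deg(C) \le d-2 < d$, the curve $H$ cannot be a component of $C$, so $\mo_H(C) := \mo_X(C)|_H$ is a genuine line bundle on $H$ of degree $C \cdot H = \deg(C) \le d-2$. The restriction sequence
$$0 \to \mo_X(C-H) \to \mo_X(C) \to \mo_H(C) \to 0$$
yields $h^0(\mo_X(C)) \le h^0(\mo_X(C-H)) + h^0(\mo_H(C))$. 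The first summand vanishes, because a nonzero section would produce an effective divisor on $X$ of degree $\deg(C)-d \le -2$ in $\p3$, which is absurd.

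For the second summand, I would invoke the classical result of M.\ Noether that the gonality of a smooth plane curve of degree $d \ge 3$ equals $d-1$ (see e.g.\ Arbarello--Cornalba--Griffiths--Harris, \emph{Geometry of Algebraic Curves I}, Ch.\ III, or Namba's lecture notes). In particular no line bundle on $H$ of degree at most $d-2$ admits a pencil of sections, so $h^0(\mo_H(C)) \le 1$. Combining the two bounds, $h^0(\mo_X(C)) \le 1$, as required. The main obstacle is the reliance on this gonality input, which lies outside the intersection-theoretic toolkit the paper has set up; a self-contained alternative would instead try to show via adjunction that every effective divisor $M$ on $X$ with $\deg M \le d-2$ must satisfy $M^2 < 0$, ruling out any moving pencil (since $M^2 \ge 0$ whenever $|M|$ has no fixed component), but the required upper bound $p_a(M) \le \binom{\deg M - 1}{2}$ becomes delicate when $M$ is non-reduced, so citing the gonality bound seems cleaner.
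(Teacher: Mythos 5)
Your argument is correct. The paper itself gives no in-text proof of this lemma --- it simply cites Lemma~3.6 of an earlier paper of the author --- so there is nothing internal to compare against; but your reduction and both estimates check out. The identity $h^0(\mo_X(C))=1+h^0(\N_{C|X})$ follows from the displayed sequence together with $h^1(\mo_X)=0$ (true for any smooth hypersurface in $\p3$), and since $C$ is effective this reduces everything to $h^0(\mo_X(C))\le 1$, exactly as you say. The vanishing of $h^0(\mo_X(C-H))$ is immediate because an effective divisor on $X$ has positive intersection with the very ample class $H_X$, while $(C-H)\cdot H_X=\deg(C)-d\le -2$. The gonality input is the genuinely substantive step: Max Noether's theorem that a smooth plane curve of degree $d\ge 3$ carries no $g^1_k$ with $k\le d-2$ does give $h^0(\mo_H(C))\le 1$, since a degree-$e$ line bundle with two independent sections would, after removing base points, yield a base-point-free pencil of degree at most $e\le d-2$ (and the degree-zero case forces the bundle to be trivial). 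Note that the full strength of the hypothesis $\deg(C)\le d-2$ is used precisely here, since a smooth plane curve does admit a $g^1_{d-1}$ by projection from one of its points. Your closing remark is also apt: the argument is insensitive to the multiplicities $a_i$, whereas an adjunction-based bound on $p_a$ would be awkward for non-reduced $C$; the price is importing the gonality theorem, which lies outside the cohomological toolkit the paper otherwise relies on but is entirely classical and correctly applied here.
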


\begin{proof}
See \cite[Lemma $3.6$]{D3}.
\end{proof}


\begin{lem}\label{dim3}
 Let $X$ be a smooth surface and $C$ a local complete intersection in $X$. Then, $h^0(\mo_X(-C)(d))=h^0(\I_C(d))-1$ and $h^0(\N_{C|X})=h^0(\mo_X(C))-1$.
\end{lem}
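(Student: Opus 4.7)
The plan is to handle the two identities separately using standard short exact sequences, and in each case the key point is showing the appropriate first cohomology group vanishes so that the long exact sequence splits off a clean equality.

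For the first identity, I would start from the short exact sequence
\[0 \to \I_X \to \I_C \to \mo_X(-C) \to 0\]
already appearing in the proof of Lemma~\ref{hf11}. Twisting by $d$ and using $\I_X \cong \mo_{\p3}(-d)$ (because $X$ is a degree $d$ hypersurface) gives
\[0 \to \mo_{\p3} \to \I_C(d) \to \mo_X(-C)(d) \to 0.\]
The associated long exact sequence in cohomology begins with $H^0(\mo_{\p3})\cong\mathbb{C}$, and since $H^1(\mo_{\p3})=0$, the connecting map on the right side is zero, yielding $h^0(\mo_X(-C)(d))=h^0(\I_C(d))-1$.

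For the second identity, I would use the sequence $(\ref{sh1})$ from the excerpt,
\[0 \to \mo_X \to \mo_X(C) \to \N_{C|X} \to 0,\]
and take global sections. The resulting long exact sequence reads
\[0 \to H^0(\mo_X) \to H^0(\mo_X(C)) \to H^0(\N_{C|X}) \to H^1(\mo_X).\]
Since $X$ is connected, $h^0(\mo_X)=1$, so the claimed identity reduces to the vanishing $H^1(\mo_X)=0$. This is the only substantive step. It follows from the ideal sequence $0\to\mo_{\p3}(-d)\to\mo_{\p3}\to\mo_X\to 0$: the resulting piece $H^1(\mo_{\p3})\to H^1(\mo_X)\to H^2(\mo_{\p3}(-d))$ has both outer terms zero (by the standard computation of the cohomology of line bundles on $\p3$), hence $H^1(\mo_X)=0$.

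The ``main obstacle'' is really no obstacle at all — everything is a diagram chase once one notes that the relevant hypersurface cohomology vanishes. The only thing to watch is that the statement is applied in the context of the paper where $X\subset\p3$ is a smooth degree $d$ surface (so that $\I_X=\mo_{\p3}(-d)$ and $h^1(\mo_X)=0$ are available); for a general smooth surface $X$ the second equality would only be an inequality $h^0(\N_{C|X})\ge h^0(\mo_X(C))-1$.
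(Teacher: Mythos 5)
Your proof is correct and follows essentially the same route as the paper: the first equality via the ideal-sheaf sequence $0 \to \I_X(d) \to \I_C(d) \to \mo_X(-C)(d) \to 0$ with $\I_X(d)\cong\mo_{\p3}$, and the second via the sequence (\ref{sh1}) together with $h^0(\mo_X)=1$ and $h^1(\mo_X)=0$. Your closing caveat about the implicit hypothesis $X\subset\p3$ of degree $d$ is consistent with the paper's standing conventions in Definition \ref{n1}.
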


\begin{proof}
 The first equality follows from the short exact sequence,
 \[0 \to \I_X(d) \to \I_C(d) \to i_*\mo_X(-C)(d) \to 0\]
 and the fact that $\I_X \cong \mo_{\p3}$, where $i:X \to \p3$ is the natural closed immersion.
 
 The second equality follows from the short exact sequence (\ref{sh1}) after using the facts $h^0(\mo_X)=1$ and $h^1(\mo_X)=0$.
\end{proof}

We now use these three lemmas given above to produce an explicit formula for the tangent space to the Hodge locus corresponding to the cohomology class of a reduced connected curve.

\begin{prop}\label{hf21}
Let $X$ be a smooth degree $d$ surface in $\p3$ and $C \subset X$,  a reduced, connected curve satisfying $\deg(X)\ge \deg(C)+4$.
Then \[\dim T_X(\NL([C]))=h^0(\I_C(d))-1+h^0(\N_{C|\p3}).\]
\end{prop}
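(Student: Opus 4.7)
The plan is a fairly direct dimension count, stringing together the results built up earlier. First I would observe that since $d \geq \deg(C) + 4$, Lemma \ref{hf11} applies and delivers both $H^1(\mo_X(-C)(d))=0$ and the semi-regularity of $C$. Semi-regularity, via Corollary \ref{hf12c}, identifies
\[T_X(\NL([C])) \;=\; \pr_2\bigl(T_{(C,X)} H_{P,Q_d}\bigr),\]
so it suffices to compute the dimension of the right-hand side.

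To carry this out, I would exploit the fiber-product description of $T_{(C,X)} H_{P,Q_d}$ from Definition \ref{hf1} together with Corollary \ref{dim4}. The vanishing $H^1(\mo_X(-C)(d))=0$ forces $\rho_C$ to be surjective with $\ker \rho_C \cong H^0(\mo_X(-C)(d))$; by the general behaviour of pullback squares, $\pr_1$ is then also surjective with the same kernel. Hence
\[\dim T_{(C,X)} H_{P,Q_d} \;=\; h^0(\N_{C|\p3}) + h^0(\mo_X(-C)(d)).\]
Dually, $\ker \pr_2$ equals $\ker \beta_C$, which the horizontal exact row in Theorem \ref{hf12} identifies with $H^0(\N_{C|X})$. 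Subtracting,
\[\dim T_X(\NL([C])) \;=\; h^0(\N_{C|\p3}) + h^0(\mo_X(-C)(d)) - h^0(\N_{C|X}).\]

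The last step is to convert the two right-hand terms into the advertised form. Lemma \ref{dim3} replaces $h^0(\mo_X(-C)(d))$ by $h^0(\I_C(d)) - 1$. For the remaining term, reducedness and connectedness of $C$ exhibit it as $\sum_i 1\cdot C_i$ with each $C_i$ integral, and the inequality $\deg(C) + 2 \leq d$ (a consequence of $\deg(C) + 4 \leq d$) lets Lemma \ref{a4e} apply, yielding $h^0(\N_{C|X}) = 0$. Plugging both in gives the stated formula. No genuine obstacle arises here; the only piece of bookkeeping worth flagging is the verification that $C$ satisfies the hypothesis of Lemma \ref{a4e}, which reducedness takes care of automatically.
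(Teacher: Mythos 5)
Your argument is correct and follows essentially the same route as the paper's proof: Lemma \ref{hf11} for the vanishing and semi-regularity, Corollaries \ref{hf12c} and \ref{dim4} to reduce to the fiber-product dimension count, and Lemmas \ref{dim3} and \ref{a4e} to convert $h^0(\mo_X(-C)(d))$ and kill $h^0(\N_{C|X})$. The only difference is that you spell out the pullback-square bookkeeping more explicitly than the paper does.
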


\begin{proof}
Lemma \ref{hf11} tells us that if $\deg(X)\ge \deg(C)+4$ and $C$ reduced, connected then $h^1(\mo_X(-C)(d))=0$ and $h^1(\mo_X(C))=0$. 
Corollary \ref{dim4} implies that $\rho_C$ is surjective.
Lemma \ref{a4e} implies that $h^0(\N_{C|X})=0$, so $\beta_C$ is injective. Using Lemma \ref{dim3} and Corollary \ref{hf12c} we can finally conclude that,
\[\dim T_X(\NL([C]))=h^0(\mo_X(-C)(d))+h^0(\N_{C|\p3})-h^0(\N_{C|X})=h^0(\I_C(d))-1+h^0(\N_{C|\p3}).\]This proves the proposition.
\end{proof}

We recall a result due to Bloch which gives a relation between the Hodge locus corresponding to the cohomology class of a semi-regular curve and the corresponding flag Hilbert scheme.
\begin{thm}[{\cite[Theorem $7.1$]{b1}}]\label{b71}
 Let $X \to S$ be a smooth projective morphism with $S=\Spec(\mb{C}[[t_1,...,t_r]])$. Let $X_0 \subset X$ be the
 closed fiber and let $Z_0 \subset X_0$ be a local complete intersection subscheme of codimension $1$. Suppose that the
 topological cycle class $[Z_0] \in H^2(X_0.\mb{Z})$ lifts to a horizontal class $z \in F^1H^2(X)$ (where $F^\bullet$
 refers to the Hodge filtration) and that $Z_0$ is semi-regular in $X_0$. Then, $Z_0$ lifts to a subscheme $Z \subset X$.
\end{thm}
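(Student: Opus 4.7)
The plan is to construct $Z$ as the algebraisation of a formal subscheme built by induction on the order of infinitesimal neighbourhoods. Let $\mathfrak{m}\subset\mb{C}[[t_1,\dots,t_r]]$ denote the maximal ideal, $S_n=\Spec\mb{C}[[t_1,\dots,t_r]]/\mathfrak{m}^{n+1}$, and $X_n=X\times_S S_n$. I would inductively produce compatible local complete intersection liftings $Z_n\subset X_n$ of $Z_0$. Once this is done, the $Z_n$ patch together to a formal closed subscheme $\widehat{Z}\subset\widehat{X}$, and Grothendieck's formal existence theorem, applied to the coherent ideal sheaf defining $\widehat{Z}$ on the proper formal scheme $\widehat{X}$ over $\Spec\mb{C}[[t_1,\dots,t_r]]$, algebraises it to the desired $Z\subset X$.

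For the induction step, I would invoke the standard obstruction theory for infinitesimal deformations of a local complete intersection subscheme of a smooth scheme: given a lifting $Z_{n-1}\subset X_{n-1}$, there is an obstruction class
\[o(Z_{n-1})\in H^1(Z_0,\N_{Z_0|X_0})\otimes_{\mb{C}}\mathfrak{m}^n/\mathfrak{m}^{n+1},\]
whose vanishing is equivalent to the existence of a lifting $Z_n\subset X_n$ restricting to $Z_{n-1}$. The central step is to show that the image of $o(Z_{n-1})$ under the semi-regularity map $\pi:H^1(\N_{Z_0|X_0})\to H^2(\mo_{X_0})$ (tensored with $\mathfrak{m}^n/\mathfrak{m}^{n+1}$) equals the obstruction for the cycle class $[Z_0]$ to remain in $F^1H^2$ at order $n$. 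Since by hypothesis $[Z_0]$ lifts to the horizontal class $z\in F^1H^2(X)$, this Hodge-theoretic obstruction vanishes at every order, and semi-regularity, i.e.\ injectivity of $\pi$, then forces $o(Z_{n-1})=0$, closing the induction.

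The main obstacle is establishing the compatibility identity
\[\pi(o(Z_{n-1}))=\textrm{obstruction for }[Z_0]\textrm{ to remain in }F^1\textrm{ at order }n\]
in $H^2(\mo_{X_0})\otimes\mathfrak{m}^n/\mathfrak{m}^{n+1}$. For the first-order case ($n=1$) this is exactly the factorisation recorded in Theorem \ref{hf4} together with Theorem \ref{a3}: cup product with $[Z_0]$ on $H^1(\T_{X_0})$ factors as $\pi\circ u_*$, and the right-hand side is the obstruction to $[Z_0]$ staying Hodge under the Kodaira--Spencer class of $X_0\hookrightarrow X_1$. For general $n$ one must promote this to a cocycle-level identification, most cleanly done by using the Atiyah class of $Z_{n-1}$ together with the Gauss--Manin connection on the relative de Rham cohomology of $X/S$, so that both obstructions appear as truncations of the same $\mathfrak{m}$-adic expansion. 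Once this compatibility is in hand the proof concludes, and the final algebraisation step is a purely formal consequence of the properness of $X\to S$.
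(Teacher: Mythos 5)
The paper does not actually prove this statement: it is quoted from Bloch \cite{b1} and used as a black box, so the only meaningful comparison is with Bloch's own argument. Your outline reproduces that argument's architecture faithfully: lift $Z_0$ order by order over $S_n=\Spec\,\mb{C}[[t_1,\dots,t_r]]/\mathfrak{m}^{n+1}$, locate the obstruction in $H^1(\N_{Z_0|X_0})\otimes\mathfrak{m}^n/\mathfrak{m}^{n+1}$, kill it using injectivity of the semi-regularity map together with the vanishing of the Hodge-theoretic obstruction supplied by the horizontal class $z\in F^1H^2(X)$, and algebraize by Grothendieck's existence theorem (properness of $X\to S$ makes that last step unproblematic, as is the scaffolding about flat lci liftings of a divisor).

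However, the step you flag as ``the main obstacle'' is not a detail to be deferred; it \emph{is} the theorem. The identity $\pi(o(Z_{n-1}))=(\text{$n$-th order obstruction to }[Z_0]\text{ remaining in }F^1)$ is what the bulk of Bloch's paper is devoted to, and it does not follow formally from the first-order statement (Theorems \ref{hf4} and \ref{a3} here). At order $n\ge 2$ the obstruction $o(Z_{n-1})$ depends on the chosen lifting $Z_{n-1}$ and not merely on $Z_0$, while the Hodge-theoretic side involves the $n$-th order coefficient of the Gauss--Manin-flat extension of $[Z_0]$ measured against the varying Hodge filtration; matching the two requires a cocycle-level construction of the semi-regularity map (via the cotangent complex, or the Atiyah class of the ideal sheaf) and an explicit comparison with the Gauss--Manin connection in a \v{C}ech--de Rham model. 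Your closing sentence names the right tools but supplies no computation, so as written the proof has a genuine gap exactly where all the work lies.
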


This result implies the following:

\begin{thm}\label{dim1}
 Let $X$ be a surface, $C$ be a semi-regular curve in $X$ and $\gamma \in H^{1,1}(X,\mb{Z})$ be the class of  $C$. 
 For any irreducible component $L'$ of $\overline{\NL(\gamma)}$
 (the closure is taken in the Zariski topology on $U_d$)
 there exists an irreducible component $H'$ of $H_{P,Q_{d_{\red}}}$ containing the pair $(C,X)$ such that  
 $\pr_2(H')$  coincides with $L'_{\red}$, the associated reduced subscheme, where $\pr_2$ is  the second projection map from $H_{P,Q_d}$ to $H_{Q_d}$. 
\end{thm}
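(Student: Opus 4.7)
The plan is to use Bloch's Theorem \ref{b71} to propagate $C$ to a flat family of curves along $L'_{\red}$, package this into a morphism to the flag Hilbert scheme $H_{P,Q_d}$, and then identify the resulting target component $H'$ as one whose second projection recovers $L'_{\red}$.

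First, I would pick a resolution of singularities $\pi:Y\to L'_{\red}$ together with a point $y_0\in\pi^{-1}(X)$, so that the formal neighbourhood $\widehat{Y,y_0}\cong \Spec\mb{C}[[t_1,\ldots,t_r]]$ with $r=\dim L'_{\red}$. Pulling back the universal family along $\widehat{Y,y_0}\to Y\to L'_{\red}\hookrightarrow U_d$ produces a smooth projective formal family with special fibre $X$. Since $L'\subseteq \NL(\gamma)$, the class $\gamma$ extends to a flat section of the Gauss-Manin local system on $L'$ that remains in $F^1\mc{H}$ (this is the defining property of the Hodge locus; compare Lemma \ref{tan1}); its pullback to $\widehat{Y,y_0}$ therefore gives a horizontal lift of $\gamma$ inside $F^1H^2$ of the total space. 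Because $C$ is semi-regular in $X$ by hypothesis, Theorem \ref{b71} applies and furnishes a lift $\mc{C}\subset \mathcal{X}|_{\widehat{Y,y_0}}$ of $C$ as a relative local complete intersection of codimension one.

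Second, the universal property of the flag Hilbert scheme converts the formal family of pairs $(\mc{C},\mathcal{X}|_{\widehat{Y,y_0}})$ into a morphism of formal schemes $\varphi:\widehat{Y,y_0}\to H_{P,Q_d}$ mapping the closed point to $(C,X)$. Since $\widehat{Y,y_0}$ is irreducible, its image is contained in the formal completion at $(C,X)$ of a single irreducible component $H'$ of $H_{P,Q_d}$ through $(C,X)$; this is the $H'$ that I claim satisfies $\pr_2(H')=L'_{\red}$.

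Third, I would verify this equality in two steps. For $\pr_2(H')\subseteq\overline{\NL(\gamma)}$: the scheme $H_{P,Q_d}$ is projective so $\pr_2(H')$ is closed irreducible, and for every $(C',X')\in H'$, parallel transport of $[C']$ along a simply connected analytic path to $(C,X)$ identifies it with $\gamma$, so $X'\in\NL(\gamma)$. For $L'_{\red}\subseteq\pr_2(H')$: the composition $\pr_2\circ\varphi$ coincides with the natural map $\widehat{Y,y_0}\to \widehat{L'_{\red},X}\hookrightarrow \widehat{U_d,X}$, which shows that the image of $\widehat{Y,y_0}$ in $U_d$ is contained in $\pr_2(H')\cap L'_{\red}$ and meets one of the analytic branches of $L'_{\red}$ at $X$; since $L'_{\red}$ is irreducible of dimension $r$ and the branch has the same dimension, the Zariski closure of this branch in $L'_{\red}$ is all of $L'_{\red}$, forcing $L'_{\red}\subseteq\pr_2(H')$. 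Together with the maximality of $L'_{\red}$ as an irreducible component of $\overline{\NL(\gamma)}$, this yields $\pr_2(H')=L'_{\red}$.

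The main obstacle will be the rigorous application of Bloch's theorem over a base that is only reduced and possibly singular at $X$; passing to the resolution $Y\to L'_{\red}$ circumvents the regularity issue but forces one to keep track of the correspondence between the analytic branches of $L'_{\red}$ at $X$ and the points of $\pi^{-1}(X)$ in the final comparison. A secondary point to check carefully is the translation of the scheme-theoretic Noether--Lefschetz condition on $L'$ into the horizontal $F^1H^2$ hypothesis of Theorem \ref{b71}, which requires combining Lemma \ref{tan1} with the defining property of $\NL(\gamma)$ as an analytic Hodge locus.
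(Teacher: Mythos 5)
Your proposal is correct and follows essentially the same route as the paper: one inclusion via Bloch's semi-regularity theorem (Theorem \ref{b71}) applied over a formal neighbourhood to lift $C$ along $L'$, and the reverse inclusion via deformation theory and the local constancy of the cycle class. Your write-up simply makes explicit the steps (resolution, formal completion, universal property of the flag Hilbert scheme, scheme-theoretic image) that the paper's two-sentence proof leaves to the reader.
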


\begin{proof}
Using basic deformation theory, one can check that the image under $\pr_2:H_{P,Q_d} \to H_{Q_d}$ of all the irreducible components of $H_{{P,Q_d}_{\red}}$ containing the pair $(C,X)$
is contained in $\ov{\NL(\gamma)}$. But, Theorem \ref{b71} proves the converse i.e., $\ov{\NL(\gamma)}_{\red}$ is contained in 
$\pr_2(H_{P,Q_d})$. This proves the theorem.
\end{proof}

The following result gives a geometric classification of the components of the Noether-Lefschetz locus:
\begin{prop}\label{a81}
 Consider an irreducible component $L$ of the Noether-Lefschetz locus. Then, for a general element $X \in L$, there exists $\gamma \in H^{1,1}(X,\mb{Z})$ satisfying:
 $\gamma$ is the cohomology class of a semi-regular curve, say $C$, and $\ov{\NL(\gamma)} \cong L$. 
\end{prop}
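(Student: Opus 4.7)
The plan is to take the class $\gamma_0 \in H^{1,1}(X,\mb{Z})$ guaranteed by the introduction---for $X \in L$ general one has $\gamma_{0,\prim} \neq 0$ with $\ov{\NL(\gamma_0)} = L$---and to shift it by a large multiple of the hyperplane class until it becomes representable by a smooth, irreducible, semi-regular curve. Since adding $n[H_X]$ does not change the primitive part, and since $\NL(\alpha) = \NL(\alpha_{\prim})$, any such shift leaves $\ov{\NL(\gamma_0 + n[H_X])}$ equal to $L$.

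By the Lefschetz $(1,1)$-theorem I would write $\gamma_0 = [D_+] - [D_-]$ for effective divisors $D_\pm$ on $X$, and set $\gamma := \gamma_0 + n[H_X]$. A standard ampleness argument, using very ampleness of $\mo_X(1)$, yields that $\mo_X(D_+ - D_- + nH_X)$ is very ample for $n \gg 0$; Bertini then produces a smooth irreducible general member $C \in |\mo_X(\gamma)|$ with $[C] = \gamma$, and hence $\ov{\NL([C])} = L$.

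The core step is checking that $C$ is semi-regular. From the long exact sequence attached to (\ref{sh1}),
\[
H^1(\mo_X(C)) \xrightarrow{\alpha} H^1(\N_{C|X}) \xrightarrow{\pi_C} H^2(\mo_X),
\]
injectivity of $\pi_C$ is equivalent to $\alpha = 0$, which is implied by $H^1(\mo_X(C)) = 0$. Writing $\mo_X(C) \cong \mo_X(D_+) \otimes \I_{D_-/X} \otimes \mo_X(n)$ and applying Serre vanishing to the coherent sheaf $\mo_X(D_+) \otimes \I_{D_-/X}$ twisted by the ample $\mo_X(1)$ gives exactly this vanishing for all $n$ sufficiently large.

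The main obstacle---and the whole point of the argument---is that the three asymptotic requirements (very ampleness of $\mo_X(\gamma_0 + nH_X)$; existence of a smooth irreducible member in the complete linear system via Bertini; and the Serre-type vanishing $H^1(\mo_X(C)) = 0$) must be arranged for one and the same $n$. Each is an open condition for $n \gg 0$, so any sufficiently large $n$ serves all three at once. The genericity hypothesis on $X \in L$ enters only once, at the start, in supplying the initial class $\gamma_0$.
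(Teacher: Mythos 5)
Your argument is correct and follows essentially the same route as the paper: twist the initial class by a large multiple of $[H_X]$ (which leaves the Hodge locus unchanged since the primitive part is unaffected), realize the twisted class by an effective curve, and deduce semi-regularity from $H^1(\mo_X(C))=0$ via Serre vanishing applied to the sequence (\ref{sh1}). The only cosmetic difference is how the effective representative is produced: you write $\gamma_0=[D_+]-[D_-]$ and invoke very ampleness plus Bertini to get a smooth irreducible member, whereas the paper uses the Riemann--Roch criterion ($(\gamma+nH_X)^2>0$ and positive intersection with $H_X$) to obtain a local complete intersection curve in $|n'(\gamma+nH_X)|$; both are standard and the rest of the argument is identical.
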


\begin{proof}
 By definition, $L$ is locally a Hodge locus. So, there exists $\gamma \in H^{1,1}(X,\mb{Z})$ such that $L$ is 
 locally of the form $\NL(\gamma)$. 
 Consider $\gamma$ as an element in the Neron-Severi group. Since $\NL(\gamma) \cong \NL(a\gamma)$ for any 
 integer $a$, we can assume that 
 $\gamma$ is of the form $\sum_i a_i[C_i]$ where $a_i \in \mb{Z}$ and $C_i$ are irreducible curves and
 $\gamma.H_X>0$ (otherwise replace $\gamma$ by $-\gamma$).
 Since $H_X^2=d$, $(\gamma+nH_X)^2>0$ for $n\gg0$. 
 Using \cite[Corollary V.$1.8$]{R1}, there exists a local complete intersection 
 curve $C$ in $X$ such that  $C$ is linearly equivalent to $n'(\gamma+nH_X)$ for $n' \gg 0$. Since $\NL(\gamma)=\NL(a\gamma+bH_X)$ for any $a, b$ non-zero, we have $\NL(\gamma) = \NL([C])$. 
  
 Now, Serre's vanishing theorem and Serre duality implies that for $m \gg 0$, $H^1(\mo_X(C)(m))=0$.
 For such $m$, take $C' \in |C+mH_X|$ and $\gamma$ to be the cohomology class of $C'$. Then, 
 $\ov{\NL(\gamma)}$ is isomorphic to $L$ and $C'$ is semi-regular.
\end{proof}

\begin{cor}\label{a82}
 Let $L$ be an irreducible component of the Noether-Lefschetz locus. Then, for a general element $X \in L$, there exists $\gamma \in H^{1,1}(X,\mb{Z})$ satisfying:
 $\gamma$ is the cohomology class of a semi-regular curve, say $C$, $H^1(\mo_X(-C)(d))=0$ and $\ov{\NL(\gamma)} \cong L$.
\end{cor}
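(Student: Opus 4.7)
The plan is to bootstrap from Proposition \ref{a81}: that result already produces, for a general $X\in L$, a class $\gamma_0\in H^{1,1}(X,\mb{Z})$ that is the cohomology class of a semi-regular curve $C_0\subset X$ with $\ov{\NL(\gamma_0)}\cong L$. The only additional requirement is the vanishing $H^1(\mo_X(-C)(d))=0$, and the game is to absorb this into the same Serre-vanishing argument used in \ref{a81} by replacing $C_0$ with a sufficiently positive twist.

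The key observation is that the Hodge locus is invariant under adding multiples of the hyperplane class: for every positive integer $m$ and every $C'\in |C_0+mH_X|$, one has $\NL([C'])=\NL([C_0]+mH_X)=\NL(\gamma_0)$, because twisting by the ample class $H_X$ (which remains of type $(1,1)$ throughout the family $\mc{X}\to U_d$) does not change the Hodge locus. Thus we have the freedom to enlarge $C_0$ by $mH_X$ for any $m\gg 0$ without disturbing the statement $\ov{\NL}\cong L$.

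Next I would compute the two relevant cohomologies for such a replacement $C'$. Since $\mo_X(C')\cong \mo_X(C_0)(m)$, we obtain $\mo_X(-C')(d)\cong \mo_X(-C_0)(d-m)$, and by Serre duality on the smooth surface $X$ (using $\omega_X\cong \mo_X(d-4)$)
\[
H^1(\mo_X(-C_0)(d-m))\;\cong\;H^1(\mo_X(C_0)(m-4))^{*},\qquad H^1(\mo_X(C'))\;\cong\;H^1(\mo_X(-C_0)(d-4-m))^{*}.
\]
Both groups on the right are of the form $H^1$ of $\mo_X(C_0)$ or $\mo_X(-C_0)$ twisted by a large power of the ample sheaf $\mo_X(1)$, so Serre's vanishing theorem kills them simultaneously for $m\gg 0$.

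Finally, I would assemble these: choose $m$ large enough that both $H^1(\mo_X(C_0)(m-4))=0$ and $H^1(\mo_X(C_0)(m))=0$, and pick $C'\in |C_0+mH_X|$ (nonempty for $m\gg 0$, since $\mo_X(C_0)(m)$ is very ample and has sections). The first vanishing, via the Serre duality identification above, gives exactly $H^1(\mo_X(-C')(d))=0$; the second forces the semi-regularity map $\pi_{C'}\colon H^1(\N_{C'|X})\to H^2(\mo_X)$ to be injective, as explained in the proof of Proposition \ref{a81}. Setting $\gamma=[C']$ then fulfills all three requirements. The only potential obstacle is purely bookkeeping---making sure that the same large $m$ realizes all the vanishings at once---and this is harmless because we only need finitely many Serre-vanishing conditions on a fixed surface.
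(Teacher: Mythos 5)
Your proposal is correct and follows essentially the same route as the paper: starting from Proposition \ref{a81}, replace $C_0$ by a general member of $|C_0+mH_X|$ for $m\gg 0$ and invoke a vanishing theorem (the paper cites Enriques--Severi--Zariski for $H^1(\mo_X(-C_0)(d-m))=0$, which is your Serre-duality-plus-Serre-vanishing step). One cosmetic slip: in your display the group $H^1(\mo_X(-C_0)(d-4-m))^{*}$ is a \emph{negative} twist for large $m$, so Serre vanishing does not apply to it as written; but this is harmless since your final paragraph correctly imposes the vanishing in the untwisted form $H^1(\mo_X(C_0)(m))=0$, which does follow directly from Serre's theorem.
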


\begin{rem}
 Before we prove this statement, we point out that the main difference between the above corollary and Proposition \ref{a81}
 is that we have the additional outcome that $H^1(\mo_X(-C)(d))=0$. If one looks at the diagrams in \ref{hf1} and Theorem \ref{hf12}
 one can see this condition precisely tells us that for every infinitesimal deformation of $C$,
 there exists a corresponding infinitesimal deformation of $X$ containing it. 
\end{rem}

\begin{proof}[Proof of Corollary \ref{a82}]
 By Proposition \ref{a81}, there exists a semi-regular curve $C$ and a surface $X$ containing it 
 such that $\ov{\NL([C])}$ is isomorphic 
 to $L$, where $[C] \in H^{1,1}(X,\mb{Z})$ is the cohomology class of $C$. 
 A lemma of Enriques-Severi-Zariski \cite[Corollary $7.8$]{R1} states that for $m \gg 0$, 
 $H^1(\mo_X(-C)(d-m))=0$, where $d$ is the degree of $X$. Replacing $C$ by a general
 element in the linear system, $|C+mH_X|$, we get the corollary.
\end{proof}

\begin{note}\label{dim2}
 Let $L$ be an irreducible component of the Noether-Lefschetz locus and $C$ be as in Proposition \ref{a81}. Denote by $P$ the Hilbert polynomial of $C$. Since $C$ is semi-regular,
 Theorem \ref{dim1} implies that there exists an irreducible component of $H_{P,Q_d}$, say $H_\gamma$ such that $H_{\gamma_{\red}}$ is an irreducible component of $H_{{P,Q_d}_{\red}}$ and $\pr_2(H_\gamma)_{\red}$
 is isomorphic to $L_{\red} \cong \ov{\NL(\gamma)}_{\red}$, where $L$ is as in the previous proposition. Denote by
 $L_\gamma:=\pr_1(H_\gamma)$ and by $T_CL_\gamma:=\pr_1(T_{(C,X)}H_\gamma)$. We say that $L_\gamma$ is \emph{non-reduced} 
 if $\dim L_\gamma<\dim T_CL_\gamma$.
\end{note}

We can then compute the dimension of $\ov{\NL(\gamma)}$ as follows:
\begin{prop}\label{dim}
Notations as in \ref{dim2}.  
The dimension of $\ov{\NL(\gamma)}$, \[\dim \ov{\NL(\gamma)}=\dim I_d(C)+\dim L_\gamma-h^0(\mathcal{O}_X(C)),\]
for a generic curve $C$ in $L_\gamma$.
\end{prop}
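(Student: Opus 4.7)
The proposition is a double fiber-dimension count applied to the two projections from the irreducible component $H_\gamma \subset H_{P,Q_d}$:
$$\pr_1 : H_\gamma \to H_P, \qquad \pr_2 : H_\gamma \to H_{Q_d}.$$
By construction $\pr_1(H_\gamma) = L_\gamma$, and by Theorem \ref{dim1}, $\pr_2(H_\gamma)_{\red} = \ov{\NL(\gamma)}_{\red}$. Thus it suffices to compute the dimension of a generic fiber of each projection and equate the two resulting formulas for $\dim H_\gamma$.

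For $\pr_1$, the fiber over a generic curve $C \in L_\gamma$ parametrizes smooth degree $d$ surfaces $X' \subset \p3$ containing $C$. Using $\I_X \cong \mo_{\p3}(-d)$ and the short exact sequence $0 \to \I_X \to \I_C \to \mo_X(-C) \to 0$ (as in Lemma \ref{dim3}), the surfaces containing $C$ form an open subset of the projective space $\mathbb{P}(I_d(C))$. Hence the generic fiber has dimension $\dim I_d(C) - 1$, so
$$\dim H_\gamma = \dim L_\gamma + \dim I_d(C) - 1.$$

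For $\pr_2$, the fiber over a generic $X \in \pr_2(H_\gamma)$ consists of those curves $C' \subset X$ with Hilbert polynomial $P$ that lie in the component $H_\gamma$. Since $H_\gamma$ is irreducible and the universal family is flat, the cohomology class $[C'] \in H^2(X',\mb{Z})$ is locally constant along $H_\gamma$ (as a section of $R^2\pi_*\mb{Z}$), hence equal to $\gamma$. Now for a smooth degree $d$ surface $X \subset \p3$ the vanishing of $H^1(\mo_{\p3})$ and $H^2(\mo_{\p3}(-d))$ gives $h^1(\mo_X) = 0$, so $\Pic^0(X) = 0$ and the cohomology class determines the linear equivalence class of $C'$. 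Consequently the generic fiber is exactly the complete linear system $|C|$, of dimension $h^0(\mo_X(C)) - 1$, giving
$$\dim H_\gamma = \dim \ov{\NL(\gamma)} + h^0(\mo_X(C)) - 1.$$
Equating the two formulas for $\dim H_\gamma$ yields the claimed identity.

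The main subtle point is the identification of the generic fiber of $\pr_2$ with the complete linear system $|C|$: one must rule out divisors on $X$ that have the same Hilbert polynomial as $C$ (equivalently the same degree and arithmetic genus) yet do not lie in $|C|$. Here the irreducibility of $H_\gamma$ together with the flatness of the family pins the cohomology class to $\gamma$, and the vanishing $h^1(\mo_X) = 0$ then upgrades sameness of cohomology class to linear equivalence. All other ingredients are routine: the fiber of $\pr_1$ is computed by a single short exact sequence, and the equality $\dim \pr_2(H_\gamma) = \dim \ov{\NL(\gamma)}$ is furnished directly by Theorem \ref{dim1}.
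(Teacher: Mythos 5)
Your proof is correct and follows essentially the same route as the paper: a double fiber-dimension count for the two projections $\pr_1, \pr_2$ from $H_\gamma$, identifying the generic fibers with $\mathbb{P}(I_d(C))$ and $|C|$ respectively and equating the two expressions for $\dim H_\gamma$. The only difference is that you spell out why the generic fiber of $\pr_2$ is exactly the complete linear system (constancy of the cohomology class along the irreducible component plus $h^1(\mo_X)=0$), a point the paper's proof asserts without comment.
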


\begin{proof}
This follows from the fiber dimension theorem (see \cite[II Ex. $3.22$]{R1}) which states that for a morphism of finite type between two integral schemes, $f:X \to Y$,
$\dim X=\dim f^{-1}(y)+\dim Y$ for a general point $y \in Y$. 
We then have the following maps,
\[\pr_1:H_\gamma \twoheadrightarrow L_{\gamma}\, \, \, \pr_2:H_\gamma \twoheadrightarrow \ov{\NL(\gamma)}\]
where the generic fiber of $\pr_1$ is $\mb{P}(I_d(C))$ and that of $\pr_2$ is $\mathbb{P}(H^0(\mathcal{O}_X(C)))$.
We then conclude, \[\dim {H}_\gamma=\dim L_{\gamma}+\dim I_d(C)-1=\dim \ov{\NL(\gamma)}+h^0(\mo_X(C))-1.\]
Therefore,  $\dim \ov{\NL(\gamma)}=\dim L_{\gamma}+\dim I_d(C)-h^0(\mathcal{O}_X(C))$.
\end{proof}

We finally come to the main result of the section which tells us of a necessary and sufficient criterion for non-reducedness of irreducible components of $\NL_d$:
\begin{thm}[Non-reducedness]\label{a4}
Let $L$ be an irreducible component of the Noether-Lefschetz locus and $C$ be a semi-regular curve such that
$L \cong \ov{\NL([C])}$. Then, $L$ is non-reduced if and only if $L_\gamma$ is non-reduced in the sense of \ref{dim2}.
In particular, $L$ is non-reduced if and only if $H_{\gamma}$ is non-reduced, where $H_\gamma$ is as constructed in \ref{dim2}.
\end{thm}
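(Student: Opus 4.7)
\emph{Plan.} The idea is to show that the three ``excesses'' $\dim T_X L - \dim L$, $\dim T_C L_\gamma - \dim L_\gamma$, and $\dim T_{(C,X)} H_\gamma - \dim H_\gamma$, measured at suitably generic base points, all coincide; from this, both equivalences in the statement follow immediately.

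First I would pick $(C,X)$ a general point of $H_\gamma$, so that it lies in no other irreducible component of $H_{P,Q_d}$; then $T_{(C,X)}H_\gamma = T_{(C,X)}H_{P,Q_d}$, and the images of $(C,X)$ under $\pr_1$ and $\pr_2$ are generic in $L_\gamma$ and in $L$ respectively. Since $C$ is semi-regular, Corollary \ref{hf12c} yields $T_X(\NL([C])) = \pr_2(T_{(C,X)}H_{P,Q_d}) = \pr_2(T_{(C,X)}H_\gamma)$, while by Note \ref{dim2} one has $T_C L_\gamma = \pr_1(T_{(C,X)}H_\gamma)$. The fibered-product description of $T_{(C,X)} H_{P,Q_d}$ in Definition \ref{hf1} identifies the kernels of the two projections as $\ker \rho_C \cong H^0(\mo_X(-C)(d))$ and $\ker \beta_C \cong H^0(\N_{C|X})$. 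Applying rank--nullity to both projections on $T_{(C,X)} H_\gamma$ and subtracting eliminates the common term $\dim T_{(C,X)}H_\gamma$; after invoking Lemma \ref{dim3} we obtain
\[
\dim T_C L_\gamma - \dim T_X L \;=\; h^0(\N_{C|X}) - h^0(\mo_X(-C)(d)) \;=\; h^0(\mo_X(C)) - h^0(\I_C(d)).
\]

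On the scheme-theoretic side, Proposition \ref{dim} reads $\dim L = \dim I_d(C) + \dim L_\gamma - h^0(\mo_X(C))$, i.e.\ $\dim L_\gamma - \dim L = h^0(\mo_X(C)) - h^0(\I_C(d))$. Subtracting this from the previous display gives
\[
\dim T_X L - \dim L \;=\; \dim T_C L_\gamma - \dim L_\gamma,
\]
which is precisely the first equivalence. For the ``in particular'' statement, combine $\dim H_\gamma = \dim L_\gamma + \dim I_d(C) - 1$ (from the proof of Prop.~\ref{dim}) with the rank--nullity identity $\dim T_{(C,X)} H_\gamma = \dim T_C L_\gamma + \dim \ker \rho_C = \dim T_C L_\gamma + \dim I_d(C) - 1$; subtracting gives $\dim T_{(C,X)} H_\gamma - \dim H_\gamma = \dim T_C L_\gamma - \dim L_\gamma$, so $H_\gamma$ is non-reduced if and only if $L_\gamma$ is, hence if and only if $L$ is.

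The delicate point I anticipate is the identification $T_{(C,X)}H_\gamma = T_{(C,X)}H_{P,Q_d}$ needed in order to transfer the conclusion of Corollary \ref{hf12c} from the ambient flag Hilbert scheme to the component $H_\gamma$. This rests on the standard fact that at a sufficiently generic point of an irreducible component no other component passes through; the only subtlety is that the genericity of $(C,X)$ must be chosen so that its images under $\pr_1$ and $\pr_2$ are simultaneously generic in $L_\gamma$ and $L$, so that the three notions of non-reducedness can all be tested at compatible base points. Once this is secured, the proof reduces to transparent bookkeeping with the Cartesian square of Definition \ref{hf1}, Lemma \ref{dim3}, and Proposition \ref{dim}.
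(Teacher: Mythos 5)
Your proof is correct and follows essentially the same route as the paper's: both arguments rest on the Cartesian square of Definition \ref{hf1}, rank--nullity applied to the two projections of $T_{(C,X)}H_{P,Q_d}$ with kernels identified via Corollary \ref{dim4} and the sequence (\ref{sh2}), the identification $T_X(\NL([C]))=\pr_2 T_{(C,X)}H_{P,Q_d}$ from Corollary \ref{hf12c}, and the dimension count of Proposition \ref{dim} together with Lemma \ref{dim3} to cancel the common terms. The only difference is cosmetic bookkeeping (you subtract the two rank--nullity identities directly instead of writing out $\dim T_X\NL(\gamma)$ explicitly), and your flagged "delicate point" about choosing $(C,X)$ generic enough that $T_{(C,X)}H_\gamma=T_{(C,X)}H_{P,Q_d}$ is exactly the implicit assumption the paper also makes.
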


\begin{proof}
Consider $C$ general in $L_\gamma$. Using the commutative diagram in Theomre \ref{hf12} we have that \[\Ima \rho_C \cap \Ima \beta_C = \rho_C \circ \pr_2(T_{(C,X)}H_{P,Q_d})=\beta_C \circ \pr_1(T_{(C,X)}H_{P,Q_d}).\]
By Corollary \ref{hf12c}, $T_X(\NL(\gamma))=\pr_2(T_{(C,X)}H_{P,Q_d})$. Therfore, Corollary \ref{dim4} implies that
\begin{eqnarray*}
 \dim T_X\NL(\gamma)&=& h^0(\mo_X(-C)(d)) + \dim T_{C}L_\gamma-h^0(\N_{C|X})\\
 &=&\dim I_d(C)+\dim T_CL_\gamma-h^0(\mo_X(C))
\end{eqnarray*}
where the last equality follows from Lemma \ref{dim3}.

Lemma \ref{dim} implies that \[\dim T_X\NL(\gamma) - \dim \NL(\gamma)  = \dim T_C(L_{\gamma})-\dim L_{\gamma}.\]
So, $\NL(\gamma)$ is non-reduced if and only if $L_\gamma$ is non-reduced.

Furthermore, using \ref{hf1} and the fact that the fiber over $C \in L_\gamma$ to the surjective projection morphism from $H_\gamma$ to $L_\gamma$ is $\mb{P}(I_d(C))$ one can see that \[\dim T_{(C,X)}H_\gamma=\dim T_CL_\gamma+h^0(\mo_X(-C)(d)), \, \, 
                                          \dim H_\gamma=\dim L_\gamma+\dim \mb{P}(I_d(C)),
                                         \]where $I_d(C)$ is the degree $d$ graded piece in the ideal of $C$.
Since $h^0(\mo_X(-C)(d))=\dim \mb{P}(I_d(C))$, $H_\gamma$ is non-reduced if and only if $T_CL_\gamma$ is non-reduced.
This proves the theorem.
\end{proof}

\section{Examples}

In this section we first look at some examples where a component of the Hilbert scheme is non-reduced. We then use these results in \S \ref{sum}
to give examples of a few non-reduced components of the Noether-Lefschetz locus.

\subsection{A smooth example}

\begin{thm}[Kleppe \cite{K1}]\label{a83}
There exists an irreducible component $L$ of the Hilbert scheme of curves in $\p3$ such 
that a general curve $C$ in $L$ is smooth, contained in a cubic surface and $h^1(\I_C(3)) \not=0$ for the following range:
\begin{enumerate}
 \item If $\deg(C) \ge 18$ then $g(C)>7+(\deg(C)-2)^2/8$.
  \item and $g(C)>-1+(\deg(C)^2-4)/8$ for $17 \ge \deg(C) \ge 14$
 \end{enumerate}
\end{thm}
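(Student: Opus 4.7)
The plan is to construct the component $L$ explicitly as the closure in the Hilbert scheme of the locus of smooth curves lying on a smooth cubic surface $S \subset \p3$, parametrized by a suitable divisor class. Recall that $\Pic(S) \cong \z^7$ is generated by the hyperplane class $h$ and the six exceptional divisors $e_1,\dots,e_6$ obtained by realizing $S$ as $\p^2$ blown up in six points in general position, with $K_S=-3h+\sum e_i$. Given a class $D=ah-\sum b_i e_i$ with $a$ sufficiently large relative to the $b_i$, a general member of $|D|$ on a general smooth cubic $S$ is a smooth irreducible curve of degree $d=3a-\sum b_i$ and genus $g=1+(a^2-3a-\sum(b_i^2-b_i))/2$. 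I would first fix a pair $(a;b_1,\dots,b_6)$ whose numerical invariants lie in the required ranges, and then verify smoothness and irreducibility of the general member via standard Bertini-type arguments on the blow-up.

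Next, I would carry out the parameter count. The family of pairs $(C,S)$ with $S$ a smooth cubic and $C\in|D|$ has dimension $19+\dim|D|$, and the second projection to the Hilbert scheme is injective on an open set (since for suitable $(a;b_i)$ the general such $C$ lies on no other cubic, which I would verify by checking $h^0(\mo_S(3h-D))=0$ on a general $S$). This identifies the image with an irreducible locally closed subscheme of the Hilbert scheme of dimension $19+\dim|D|$. To upgrade this to an irreducible component, I would argue that no infinitesimal deformation of the general $C$ can move it off the cubic: this follows because the obstruction sits inside $H^0(\N_{C|\p3})$ and, for the chosen numerics, every first order deformation of $C$ is realized by deforming the pair $(C,S)$ in the family, so that the parameter space surjects onto the tangent space at $[C]$ in the Hilbert scheme component containing it.

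The crucial calculation is $h^1(\I_C(3))\neq 0$. From the short exact sequence $0\to\mo_{\p3}\to\I_C(3)\to\mo_S(3h-D)\to 0$, obtained by twisting $0\to\I_S\to\I_C\to\mo_S(-C)\to 0$ by $\mo_{\p3}(3)$, I get $h^1(\I_C(3))=h^1(\mo_S(3h-D))$. By Serre duality on $S$ this equals $h^1(\mo_S(D-K_S-3h))=h^1(\mo_S(D-6h+\sum e_i))$, and for the chosen large $a$ and nonnegative $b_i$, a direct Riemann--Roch computation on $S$ together with the vanishing of $h^0$ and $h^2$ of this class produces a positive value. The numerical hypotheses on $(d,g)$ in the two stated ranges are precisely what one obtains by optimizing $g$ over admissible $(a;b_i)$ subject to $h^1(\mo_S(3h-D))>0$.

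The main obstacle is the second paragraph: showing that the constructed family is an \emph{irreducible component} of $\Hilb$ rather than a proper closed subset of a larger component. This requires a careful deformation-theoretic argument controlling $H^0(\N_{C|\p3})$, which for generic curves on a cubic surface one analyzes via the normal bundle sequence $0\to\N_{C|S}\to\N_{C|\p3}\to\N_{S|\p3}\otimes\mo_C\to 0$ and comparing with the tangent space to the family of pairs $(C,S)$ coming from diagram (\ref{dia2}); the excess cohomology $h^1(\I_C(3))\neq 0$ is precisely what obstructs the component from being reduced of the naive expected dimension, a phenomenon that Kleppe exploits to ensure maximality of the family.
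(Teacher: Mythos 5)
First, note that the paper does not actually prove this statement: it is quoted from Kleppe \cite{K1}, with only the remark that it generalizes Mumford's example of degree $14$, genus $24$ curves on cubic surfaces. So your sketch can only be judged on its own merits. Its framework is the right one and is indeed how these examples are built: curves in a class $D=ah-\sum b_ie_i$ on a smooth cubic $S$, the parameter count $19+\dim|D|$, and the identity $h^1(\I_C(3))=h^1(\mo_S(3H-D))$ obtained from $0\to\mo_{\p3}\to\I_C(3)\to\mo_S(3H-D)\to 0$ are all correct. (A small inaccuracy: in Mumford's own case $3H-D=-H-2e_1$, so $h^2(\mo_S(3H-D))=h^0(\mo_S(2e_1))=1\neq 0$ and $\chi=0$; the positivity of $h^1$ there comes from the non-vanishing of $h^2$, not from "vanishing of $h^0$ and $h^2$" plus Riemann--Roch as you assert.)

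The genuine gap is your second paragraph, and the claim made there is not just unproved but false for precisely the curves you need. You assert that "every first order deformation of $C$ is realized by deforming the pair $(C,S)$", i.e.\ that the flag Hilbert scheme surjects onto $H^0(\N_{C|\p3})$ at $(C,S)$. By Corollary \ref{dim4} of this paper, that surjectivity would require $h^1(\mo_S(-C)(3))=0$, which is exactly $h^1(\I_C(3))=0$ --- the negation of the property the theorem is designed to produce. Concretely, in Mumford's example $h^0(\N_{C|\p3})=57$ while your family has dimension $56$: the map $H^0(\N_{C|\p3})\to H^0(\mo_C(3))$ is surjective and the image of $H^0(\mo_S(3))$ in $H^0(\mo_C(3))$ has codimension $h^1(\mo_S(-C)(3))=1$, so there genuinely is a first-order deformation moving $C$ off every cubic; it is merely obstructed at second order. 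An infinitesimal argument therefore cannot show that the family is a component (if it could, the component would be smooth, hence reduced, defeating the purpose). The actual argument of Mumford and Kleppe is global: the numerical hypotheses on $(d,g)$ in the two ranges are Castelnuovo--Halphen-type genus bounds which force \emph{every} curve of that degree and genus (in particular the general member of any component of the Hilbert scheme through $[C]$) to lie on a cubic surface; combined with the dimension count this pins the component down to your $\bigl(19+\dim|D|\bigr)$-dimensional family. Your sketch instead attributes these numerical hypotheses to "optimizing $g$ subject to $h^1(\mo_S(3H-D))>0$", which misidentifies the one ingredient that makes the construction a component rather than a proper closed subset of one.
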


\begin{rem}
 This is a generaization of an example due to Mumford. See \cite{Mu1} or \cite[\S $13$]{R3} for further details on his example.
 \end{rem}

\subsection{Martin-Deschamps and Perrin's example}\label{MD}


\begin{note}\label{a7}
 Let $a, d$ be positive integers, $d \ge 5$ and $a>0$. Let $X$ be a smooth projective surface in $\p3$ of degree $d$ containing a line $l$ and a smooth coplanar curve $C_1$ of degree $a$.
 Let $C$ be a divisor of the form $2l+C_1$ in $X$. Denote by $P$ the Hilbert polynomial of $C$.
\end{note}

\begin{thm}[Martin-Deschamps and Perrin \cite{mar1}]\label{a84}
There exists an irreducible component, say $L$ of $H_P$ such that a general curve $D \in L$ is a divisor in a smooth degree $d$ surface in $\p3$ of the form $2l'+C_1'$ where $l', C_1'$ are coplanar curves with $\deg(l')=1$
and $\deg(C_1')=a$. Furthermore, $L$ is non-reduced.
\end{thm}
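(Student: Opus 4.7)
The plan is to realize $L$ as the closure of an explicit parameter space inside $H_P$, verify it is a component, and then detect non-reducedness by comparing $\dim L$ with $h^0(\N_{D|\p3})$ at a generic $D \in L$.

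First I would introduce an incidence variety $\mc{W}$ parametrizing quadruples $(H, l', C_1', X')$ where $H \subset \p3$ is a plane, $l' \subset H$ is a line, $C_1' \subset H$ is a curve of degree $a$, and $X'$ is a smooth degree $d$ surface containing $l' + C_1'$. There is a natural morphism $\phi: \mc{W} \to H_P$ sending $(H, l', C_1', X')$ to the subscheme $2l' + C_1'$ of $\p3$, where the non-reduced structure of $2l'$ is the one induced by its being a Cartier divisor on $X'$ (and in particular depends on $X'$). I would show that $\phi$ is generically finite, set $L := \overline{\phi(\mc{W})}$, and note that by construction a general element of $L$ has the claimed form.

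Second, to confirm that $L$ is a full irreducible component of $H_P$, I would argue that generic infinitesimal deformations of $D = 2l' + C_1' \in L$ inside $\p3$ come from deformations of the quadruple $(H, l', C_1', X')$. This amounts to the statement that the scheme structure of $D$ along the non-reduced locus $l'$ locally detects $X'$, so any small flat deformation of $D$ in $\p3$ carries along a deformation of the ambient surface and of the plane, and hence lies in $L$.

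Third, to establish non-reducedness, I would compute $h^0(D, \N_{D|\p3})$ via the normal sheaf sequence arising from $D \subset X' \subset \p3$,
\[0 \to \N_{D|X'} \to \N_{D|\p3} \to \N_{X'|\p3} \otimes \mo_D \to 0,\]
together with $\N_{D|X'} \cong \mo_{X'}(D) \otimes \mo_D$ and $\N_{X'|\p3} \otimes \mo_D \cong \mo_D(d)$. Comparing with $\dim L = \dim \mc{W}$ from Step 1 should reveal a strictly positive gap. The main obstacle is exactly this cohomological computation: the normal sheaf $\N_{D|\p3}$ has delicate behaviour along the non-reduced component $l'$ and at the point $l' \cap C_1'$, and one must identify the ``extra'' first-order deformations of $D$ -- those that move the double structure along $l'$ transverse to $H$ -- as genuine tangent vectors at $[D] \in H_P$ that are not tangent to $L$. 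Verifying that these contribute the strict inequality $h^0(\N_{D|\p3}) > \dim L$ is the technical heart of Martin-Deschamps and Perrin's argument.
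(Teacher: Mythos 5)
The paper itself does not prove this statement: its ``proof'' is a one-line citation to \cite[Proposition $0.6$, Theorems $2.4$, $3.1$]{mar1}, so you are reconstructing an external argument rather than matching anything in the text. Measured on its own terms, your proposal contains one outright error and two genuine gaps.

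The error is in Step 1: $\phi$ is not generically finite. A generic $D=2l'+C_1'$ in the image determines $l'$, $C_1'$ and $H$, but \emph{not} $X'$: any smooth degree-$d$ surface containing the subscheme $D$ induces the same divisor $2l'+C_1'$, and the set of such surfaces is a nonempty open subset of $\mathbb{P}(H^0(\I_D(d)))$, which is positive-dimensional (from $0\to\I_{X'}(d)\to\I_D(d)\to\mo_{X'}(-D)(d)\to 0$ one gets $h^0(\I_D(d))\ge 1+h^0(\mo_{X'}(-D)(d))\gg 1$, since $\deg D=a+2<d$). So $\dim L=\dim\mc{W}-h^0(\I_D(d))+1$, not $\dim\mc{W}$; this fibration with fiber $\mathbb{P}(I_d(C))$ is exactly the one the paper exploits in Proposition \ref{dim}, and getting it wrong shifts the dimension count you intend to compare against $h^0(\N_{D|\p3})$. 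The gaps: first, the claim that $L$ is a full irreducible component of $H_P$ rests only on the heuristic that ``the scheme structure of $D$ along $l'$ locally detects $X'$''; this is not an argument, and it is precisely because $D$ is non-reduced that one must work to show every small deformation of $D$ in $\p3$ retains the shape $2l'+C_1'$. Second, and decisively, the inequality $h^0(\N_{D|\p3})>\dim L$ --- which \emph{is} the assertion of non-reducedness --- is explicitly deferred as ``the technical heart of Martin-Deschamps and Perrin's argument'' and never carried out. A writeup that names its central inequality and declines to establish it is an outline, not a proof; as it stands, neither the componenthood of $L$ nor its non-reducedness has been demonstrated.
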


\begin{proof}
 The theorem follows from \cite[Proposition $0.6$, Theorems $2.4, 3.1$]{mar1}.
\end{proof}


\subsection{Computing Castelnuovo-Mumford regularity}

\begin{para}
 In this section, we show that the Castelnuovo-Mumford regularity of the curves considered in Notation \ref{a7} is equal to $d$.
 \end{para}

\begin{note}\label{cas4}
Denote by $S$ the ring $\Gamma_*(\mo_{\p3})=\oplus_{n \in \mb{Z}} \Gamma(\p3,\mo_{\p3}(n))$.
 Let ${C_0}$ be a plane curve of degree $a+1$ for some positive integer $a$, defined by two equations, say $l_1$, $l_2G_1$ where $l_1, l_2$ are linear polynomials and $G_1$ is a smooth 
 polynomial of degree $a$. 
 Let $X$ be a smooth surface of degree $d$ containing ${C_0}$. Then $X$ is defined by an equation of the form $F_X:=F_1l_1+F_2l_2G_1$. By taking $X$ to be a general surface containing ${C_0}$, 
 we can assume that $G_1, F_1$ and $F_2$ are not contained
 in the ideal generated by $l_1, l_2$. Denote by $l$ the line defined by $l_1$ and $l_2$ and by $A_l$ its coordinate ring. 
 The aim of this section to prove that $\mo_X(-l) \otimes \mo_X(-{C_0})$ is $d$-regular.
 \end{note}
 
 We now recall a result on Castelnuovo-Mumford regularity which we will use later:
 \begin{thm}[{\cite[Ex. $4$E, Proposition $4.16$]{syz1}}]\label{syz1}
  Suppose that \[0 \to M' \to M \to M'' \to 0\]is a short exact sequence of finitely generated $S$-modules. Denote by $\mr{reg}(N)$ the Castelnuovo-Mumford regularity of a finitely generated $S$-module $N$.
  Then, $\mr{reg}(M') \le \max\{\mr{reg}(M),\mr{reg}(M'')-1\}, \mr{reg}(M'') \le \max\{\mr{reg}(M),\mr{reg}(M')+1\}, \mr{reg}(M) \le \max\{\mr{reg}(M'),\mr{reg}(M'')\}.$ Furthermore,
  given a finitely generated $S$-module $M$, $\mr{reg}(M) \ge \mr{reg}(\widetilde{M})$ where $\widetilde{M}$ is the associated coherent sheaf.  
 \end{thm}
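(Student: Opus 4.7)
The plan is to argue via the standard equivalent characterisations of Castelnuovo-Mumford regularity. For a finitely generated graded $S$-module $N$ one has
\[\mr{reg}(N) = \max\{j-i : \mr{Tor}_i^S(N,\mb{C})_j \ne 0\} = \max\{i+j : H^i_{\mathfrak{m}}(N)_j \ne 0\},\]
where $\mathfrak{m}$ is the irrelevant maximal ideal of $S$; either formulation reduces the three module inequalities to a single homological long exact sequence.

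First I would establish the three module inequalities using the Tor characterisation. From the short exact sequence $0 \to M' \to M \to M'' \to 0$ one obtains, in each internal degree $j$, the long exact sequence
\[\cdots \to \mr{Tor}_{i+1}^S(M'',\mb{C})_j \to \mr{Tor}_i^S(M',\mb{C})_j \to \mr{Tor}_i^S(M,\mb{C})_j \to \mr{Tor}_i^S(M'',\mb{C})_j \to \mr{Tor}_{i-1}^S(M',\mb{C})_j \to \cdots.\]
The three inequalities are then a three-term exactness argument: each non-vanishing $\mr{Tor}_i^S(\cdot,\mb{C})_j$ forces at least one of its two neighbours along the sequence to be non-zero, and translating the resulting index shift into the invariant $j-i$ produces in each case a bound of the expected form $\max\{\mr{reg}(\cdot),\mr{reg}(\cdot)\pm 1\}$, with the sign of the $\pm 1$ dictated by the direction of the connecting map.

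For the final assertion $\mr{reg}(M) \ge \mr{reg}(\widetilde{M})$, I would switch to the local cohomology characterisation. Serre's comparison theorem provides the four-term exact sequence
\[0 \to H^0_{\mathfrak{m}}(M) \to M \to \bigoplus_k H^0(\p3,\widetilde{M}(k)) \to H^1_{\mathfrak{m}}(M) \to 0\]
together with isomorphisms $H^i_{\mathfrak{m}}(M)_k \cong H^{i-1}(\p3,\widetilde{M}(k))$ for $i \ge 2$. Hence any non-vanishing $H^i(\widetilde{M}(k))$ with $i \ge 1$ forces $H^{i+1}_{\mathfrak{m}}(M)_k \ne 0$, so that $(i+1)+k \le \mr{reg}(M)$, and the asserted inequality drops out after accounting for the conventional $+1$ shift between the sheaf and module definitions of regularity.

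The main obstacle, such as it is, is essentially bookkeeping: one has to fix a single convention for $\mr{reg}$ (different sources shift by $1$), confirm the equivalence of the Tor and local cohomology descriptions (typically via the Koszul resolution of $\mb{C}$ over $S$ and a short spectral sequence argument), and track the $\pm 1$ index shifts carefully through the long exact sequences. No geometric input beyond the cohomology of $\mo_{\p3}(k)$ is required.
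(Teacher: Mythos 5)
The paper does not actually prove this statement; it is quoted from Eisenbud's \emph{Geometry of Syzygies}, and your route --- the long exact sequence of $\mr{Tor}^S_\bullet(-,\mb{C})$ for the three inequalities, and the Serre--Grothendieck comparison between $H^i_{\mathfrak{m}}(M)_k$ and $H^{i-1}(\p3,\widetilde{M}(k))$ for the last assertion --- is precisely the standard proof in that source. Your treatment of the final inequality $\mr{reg}(M)\ge\mr{reg}(\widetilde{M})$ is complete and correct.

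The one genuine gap sits exactly where you defer to ``the sign of the $\pm1$ dictated by the direction of the connecting map'': you never compute that sign, and when one does, it comes out \emph{opposite} to what the statement asserts in its first two inequalities. From the exact sequence $\cdots\to\mr{Tor}_{i+1}^S(M'',\mb{C})_j\to\mr{Tor}_i^S(M',\mb{C})_j\to\mr{Tor}_i^S(M,\mb{C})_j\to\cdots$, a nonzero $\mr{Tor}_i^S(M',\mb{C})_j$ forces either $\mr{Tor}_i^S(M,\mb{C})_j\neq0$ or $\mr{Tor}_{i+1}^S(M'',\mb{C})_j\neq0$, i.e.\ $j-i\le\mr{reg}(M)$ or $j-i\le\mr{reg}(M'')+1$; hence the correct bounds are $\mr{reg}(M')\le\max\{\mr{reg}(M),\mr{reg}(M'')+1\}$ and, by the analogous argument one step to the right, $\mr{reg}(M'')\le\max\{\mr{reg}(M),\mr{reg}(M')-1\}$. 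The inequality $\mr{reg}(M')\le\max\{\mr{reg}(M),\mr{reg}(M'')-1\}$ as printed is false: for $0\to\mathfrak{m}\to S\to\mb{C}\to0$ it would give $\mr{reg}(\mathfrak{m})\le\max\{0,-1\}=0$, whereas $\mr{reg}(\mathfrak{m})=1$. So your method proves the correct (Eisenbud) version of the theorem, not the version transcribed here, and you should write the signs out explicitly rather than leaving them to bookkeeping --- both because that is where all the content of the three inequalities lives, and because the misprint is consequential: the application in Theorem \ref{cas12}, which bounds the regularity of a kernel using the regularity of the cokernel \emph{minus} one, invokes precisely the direction that the long exact sequence does not give.
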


 \begin{para}
 Denote by $I_{C_0}$ (resp. $I_X$, $I_{l}$) the ideal of ${C_0}$ (resp. $X, l$). 
 Then we have the following commutative diagram:
 \begin{equation}\label{com9}
\begin{diagram}
    0&\rTo&I_X&\rTo^{f_1}&I_{C_0}&\rTo^{f_2}&I_{C_0}/I_X&\rTo&0\\
    & &\uTo^{\cong}&\circlearrowleft&\uOnto^{f_4}& \\
    & &S(-d)&\rTo^{f_3}&S(-1) \oplus S(-a-1)& \\
    & & & &\uInto^{f_5}&\\ 
    & & & &S(-2-a)&\\
   \end{diagram}
   \end{equation}
where $f_1$ is the natural inclusion map, $f_2$ is the quotient map, \[f_3:G \mapsto (GF_1,GF_2),\, \, f_4:(H_1, H_2) \mapsto H_1l_1+H_2l_2G_1,\, \, f_5:G \mapsto (Gl_2G_1,-Gl_1).\]
Note that the top horizontal row and the rightmost vertical sequence are exact. 
\end{para}

\begin{lem}\label{cas6}
 The kernel of the natural morphism $f'_5+f'_3$ from   $A_{l}(-2-a) \oplus A_{l}(-d)$ to $A_{l}(-1)\oplus A_{l}(-a-1)$ which
 maps $(\ov{G},\ov{H})$ to $(\ov{HF_1+Gl_2G_1},\ov{HF_2-Gl_1})$ is $A_{l}(-a-2) \oplus 0$, where $G, H \in S(-a-1), S(-d)$, 
 respectively and $\ov{G},\ov{H}$ are  its images in $A_l(-2-a)$ and $A_l(-d)$, respectively
\end{lem}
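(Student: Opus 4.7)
The plan is to exploit the fact that $A_l = S/(l_1,l_2)$, so passing to $A_l$ kills both $l_1$ and $l_2$ simultaneously. As a consequence, the map $f'_5+f'_3$ simplifies substantially once everything is reduced modulo the ideal of the line, and the lemma becomes a small injectivity statement in a polynomial ring in two variables.

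First I would observe that, since $\bar l_1 = \bar l_2 = 0$ in $A_l$, the expression $(\overline{HF_1+Gl_2G_1},\, \overline{HF_2-Gl_1})$ collapses to $(\bar H \cdot \bar F_1,\, \bar H \cdot \bar F_2)$. In other words, the map factors through the second summand: the $A_l(-a-2)$ factor is sent to zero, while $A_l(-d)$ maps into $A_l(-1)\oplus A_l(-a-1)$ by multiplication by the pair $(\bar F_1,\bar F_2)$. The shifts work out because the summands of $f'_5$ are designed exactly so that $Gl_2G_1$ and $-Gl_1$ land in the same graded pieces as $HF_1$ and $HF_2$.

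Next I would invoke the fact that $A_l$ is the homogeneous coordinate ring of a line in $\p3$, hence a polynomial ring in two variables and in particular an integral domain. The genericity assumption of Notation \ref{cas4}, namely that $F_1$ and $F_2$ are not contained in the ideal $(l_1,l_2)$, translates into the statement that $\bar F_1,\bar F_2$ are nonzero elements of $A_l$. Multiplication by a nonzero element in an integral domain is injective, so $\bar H \cdot \bar F_1 = 0$ already forces $\bar H = 0$.

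Combining the two steps, a pair $(\bar G,\bar H)$ lies in the kernel if and only if $\bar H=0$, with $\bar G \in A_l(-a-2)$ completely unconstrained. Hence the kernel is exactly $A_l(-a-2)\oplus 0$, as claimed. I do not anticipate any real obstacle here: the only thing that requires care is keeping track of the graded shifts and confirming that both $\bar F_1$ and $\bar F_2$ are truly nonzero in $A_l$ under the chosen genericity of $X$ (either one suffices for the argument, since $A_l$ is a domain).
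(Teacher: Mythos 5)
Your argument is correct and is essentially the paper's own proof: the paper also observes that the $f'_5$-component dies modulo $I_l$, so membership in the kernel reduces to $HF_1, HF_2 \in I_l$, and then uses that $F_1, F_2 \notin I_l$ with $I_l$ prime to force $H \in I_l$ — which is exactly your "$A_l$ is a domain and $\bar F_i \neq 0$" step in different words. No substantive difference.
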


\begin{proof}
 Since $I_{l}$ is generated by $l_1$ and $l_2$, $\ov{HF_1+Gl_2G_1}$ and $\ov{HF_2-Gl_1}$ are both 
 zero if and only if both $HF_1$ and $HF_2$ are both in $I_{l}$. Since $F_1, F_2$ are not contained in $I_{l}$, by assumption,
 and $I_{l}$ is a prime ideal, this implies $H$ is contained in $I_{l}$. In other words, the kernel of the map $f'_5+f'_3$ is isomorphic to pairs $(\ov{G},0)$ where $G \in S(-a-2)$. 
 This proves the lemma.
\end{proof}

\begin{prop}\label{cas8}
$A_{l} \otimes_S I_{C_0}/I_X$ is $d+1$-regular.
\end{prop}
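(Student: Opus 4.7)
The plan is to use diagram \eqref{com9} to present $I_{C_0}/I_X$ as the cokernel of a map of free graded $S$-modules, tensor with $A_l$ to obtain a short exact sequence whose outer terms have easily computed regularity, and then apply Theorem \ref{syz1} together with the Koszul resolution of $A_l$.

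First I would read off from diagram \eqref{com9} that $I_{C_0}/I_X$ is the cokernel of
\[
f_5 + f_3 : S(-a-2) \oplus S(-d) \longrightarrow S(-1) \oplus S(-a-1),
\]
since $I_{C_0}$ is the cokernel of $f_5$ and the inclusion $I_X \hookrightarrow I_{C_0}$ lifts through the surjection $f_4$ to $f_3$. Tensoring over $S$ with $A_l = S/(l_1,l_2)$ is right exact, yielding
\[
A_l(-a-2) \oplus A_l(-d) \xrightarrow{f'_5 + f'_3} A_l(-1) \oplus A_l(-a-1) \longrightarrow A_l \otimes_S (I_{C_0}/I_X) \longrightarrow 0.
\]
Both components of $f_5$ are multiples of $l_1$ or $l_2$, so $f'_5$ is identically zero and the presentation collapses to
\[
A_l(-d) \xrightarrow{f'_3} A_l(-1) \oplus A_l(-a-1) \longrightarrow A_l \otimes_S (I_{C_0}/I_X) \longrightarrow 0.
\]

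The one step that is not pure bookkeeping is upgrading this right-exact sequence to a short exact one, i.e., showing $f'_3$ is injective; this is exactly what Lemma \ref{cas6} supplies. That lemma identifies the kernel of $f'_5 + f'_3$ as $A_l(-a-2) \oplus 0$, which meets $0 \oplus A_l(-d)$ trivially, so $f'_3$ is injective. I therefore obtain the short exact sequence
\[
0 \longrightarrow A_l(-d) \xrightarrow{f'_3} A_l(-1) \oplus A_l(-a-1) \longrightarrow A_l \otimes_S (I_{C_0}/I_X) \longrightarrow 0.
\]

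Finally, since $l_1, l_2$ form a regular sequence of linear forms in $S$, the Koszul complex $0 \to S(-2) \to S(-1)^{\oplus 2} \to S \to A_l \to 0$ is a minimal free resolution of $A_l$, so $\mr{reg}(A_l) = 0$. Hence $\mr{reg}(A_l(-d)) = d$ and $\mr{reg}(A_l(-1) \oplus A_l(-a-1)) = \max(1, a+1) = a+1$. Applying the bound $\mr{reg}(M'') \le \max\{\mr{reg}(M),\, \mr{reg}(M')+1\}$ from Theorem \ref{syz1} to the short exact sequence above gives
\[
\mr{reg}(A_l \otimes_S (I_{C_0}/I_X)) \le \max\{a+1,\ d+1\} = d+1,
\]
where the last equality uses $d \ge a+1$, which holds because the degree $d$ surface $X$ contains the degree $a+1$ plane curve $C_0$. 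The main (and essentially the only) obstacle is the injectivity of $f'_3$, which is precisely the content of Lemma \ref{cas6}; everything else is reading off the diagram and invoking standard regularity estimates.
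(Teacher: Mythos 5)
Your proposal is correct and follows essentially the same route as the paper: present $I_{C_0}/I_X$ as the cokernel of $f_5+f_3$ via diagram \eqref{com9}, tensor with $A_l$, use Lemma \ref{cas6} to keep the sequence exact on the left (your observation that $f'_5=0$ is exactly what that lemma encodes), and finish with the Koszul resolution of $A_l$ and the bounds of Theorem \ref{syz1}. The only cosmetic difference is that you track the regularity of $A_l(-1)\oplus A_l(-a-1)$ as $a+1$ and then use $a+1\le d$, whereas the paper bounds both outer terms by $d$ directly; the conclusion is identical.
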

 
 \begin{proof}
 Using the exactness of the sequences in the diagram (\ref{com9}), we can conclude that $\Ima f_3+\Ima f_5$ is contained in $\ker(f_2 \circ f_4)$. We now show the converse inclusion. Let $h \in \ker(f_2 \circ f_4)$.
 So, $f_4(h) \in \Ima f_1$, which implies that there exists $b \in S(-d)$ such that $f_4(h-f_3(b))=0$. Therefore, there exists $c \in S(-2-a)$ such that $f_5(c)=h-f_3(b)$. This proves that 
 $\ker(f_2 \circ f_4) \subset \Ima f_3+\Ima f_5$, hence equality.
 
 Note that the image of $f_5$ maps to zero under $f_4$ due to exactness of the sequence. But, $f_4 \circ f_3$ is injective by the commutative of the above diagram and the injectivity of $f_1$. 
 So, $\Ima f_3 \cap \Ima f_5=0$. So, $\ker(f_4 \circ f_2)=S(2-a) \oplus S(-d)$.
 So, we have the following short exact sequence of $S$-modules:
 \begin{equation}\label{eqcas1}
  0 \to S(-2-a) \oplus S(-d) \xrightarrow{f_5\oplus f_3} S(-1)\oplus S(-a-1) \xrightarrow{f_4 \circ f_2} I_{C_0}/I_X \to 0
  \end{equation}
  Since tensor product is right exact, using Lemma \ref{cas6} we get the following exact sequence:
  \[0 \to A_{l}(-d) \xrightarrow{f'_3} A_{l}(-1)\oplus A_{l}(-a-1) \xrightarrow{f'_4 \circ f'_2} A_{l} \otimes_S I_{C_0}/I_X \to 0\]
  where $f'_i$ are induced by $f_i$ for $i=1,...,5$. 
  
  Using the short exact sequence \[0 \to S(-2) \to S(-1) \oplus S(-1) \to A_l \to 0\] we see that the Castelnuovo-Mumford
  regularity of $A_{l}(-d)$ and $A_{l}(-1)\oplus A_{l}(-a-1)$ is less than or equal to $d$.
  Finally, Theorem \ref{syz1} implies that the Castelnuovo-Mumford regularity of $A_{l} \otimes_S I_{C_0}/I_X$ is at 
  most $d+1$.
 \end{proof}

\begin{thm}\label{cas12}
 The sheaf $\mo_X(-l-{C_0})$ is $d$-regular. 
\end{thm}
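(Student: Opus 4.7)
The plan is to use the short exact sequence of sheaves on $X$ (pushed forward to $\mb{P}^3$)
\[
0 \to \mo_X(-l - C_0) \to \mo_X(-C_0) \to \mo_X(-C_0)|_l \to 0 \qquad (\ast),
\]
obtained by tensoring $0 \to \mo_X(-l) \to \mo_X \to \mo_l \to 0$ with the locally free sheaf $\mo_X(-C_0)$ (valid since $l \subset X$). The first task is to recognise the rightmost term: sheafifying the module presentation
\[
0 \to A_l(-d) \to A_l(-1) \oplus A_l(-a - 1) \to A_l \otimes_S (I_{C_0}/I_X) \to 0
\]
from the proof of Proposition \ref{cas8} (and identifying $A_l \otimes_S (I_{C_0}/I_X)$ with $\mo_l \otimes_{\mo_{\p3}} \mo_X(-C_0) \cong \mo_X(-C_0)|_l$) exhibits $\mo_X(-C_0)|_l$ as the line bundle $\mo_l(d - a - 2)$ on $l \cong \mb{P}^1$.

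Next I would bound the regularities of the two outer terms in $(\ast)$ as sheaves on $\mb{P}^3$. For $\mo_X(-C_0)$, Lemma \ref{hf11} gives $H^1(\mo_X(-C_0)(k)) = 0$ for $k \ge \deg(C_0) = a + 1$; combined with Serre duality on $X$ (using $\omega_X \cong \mo_X(d - 4)$), which forces the vanishing of $H^2(\mo_X(-C_0)(k)) \cong H^0(\mo_X((d - 4 - k)H_X + C_0))^\vee$ once the $H_X$-degree is negative (i.e.\ for $k \ge d - 3$), this yields $\mr{reg}(\mo_X(-C_0)) \le d - 1$. For $\mo_l(d - a - 2)$ on $\mb{P}^3$ (supported on $l \cong \mb{P}^1$), only $H^0$ and $H^1$ can be nonzero, and $H^1(\mo_l(d - a - 2)(k)) = H^1(\mo_{\mb{P}^1}(d - a - 2 + k)) = 0$ for $k \ge a - d + 1$; hence its sheaf regularity is at most $a - d + 2$, well below $d$ in our range.

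Applying the sheaf-level analogue of Theorem \ref{syz1} to $(\ast)$ (or passing to $S$-modules via $\Gamma_*$) with $\mo_X(-l - C_0)$ in the leftmost position yields
\[
\mr{reg}(\mo_X(-l - C_0)) \le \max\{\mr{reg}(\mo_X(-C_0)),\, \mr{reg}(\mo_l(d - a - 2)) + 1\} \le \max\{d - 1,\, a - d + 3\} = d - 1,
\]
so $\mo_X(-l - C_0)$ is $d$-regular. The most delicate step I anticipate is the first identification of $\mo_X(-C_0)|_l$ with $\mo_l(d - a - 2)$: matching the module sheafification from Proposition \ref{cas8} with the geometric restriction requires reconciling the two, which ultimately comes down to computing the intersection $C_0 \cdot l = 2 - d + a$ on $X$ (via $l^2_X = 2 - d$ from adjunction, together with the plane-section relation $H_X \sim D + l + C_1$ on $X$ giving $l \cdot C_1 = a$).
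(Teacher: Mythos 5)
Your skeleton --- restricting $\mo_X(-C_0)$ to $l$ via the sequence $(\ast)$ and identifying the quotient as $\mo_l(d-a-2)$ --- is the same geometric decomposition the paper uses, and your bounds on the two outer terms (Lemma \ref{hf11} plus Serre duality for $\mo_X(-C_0)$, and the intersection computation $C_0\cdot l=2-d+a$ for the restriction) are fine. The gap is in the final step: there is no ``sheaf-level analogue of Theorem \ref{syz1}'' for the \emph{subsheaf} in a short exact sequence. For $0\to F'\to F\to F''\to 0$ the long exact sequence gives
\[
H^0(F(m-1))\to H^0(F''(m-1))\to H^1(F'(m-1))\to H^1(F(m-1)),
\]
so $H^1(F'(m-1))$ is the cokernel of the restriction map on global sections, and $m$-regularity of $F$ together with any regularity bound on $F''$ gives no control over that surjectivity. (Standard counterexample: for $Z$ three collinear points in $\mathbb{P}^2$, both $\mo_{\mathbb{P}^2}$ and $\mo_Z$ are $0$-regular, yet $\I_Z$ has regularity $3$.) In your situation the needed surjectivity is
\[
H^0(\mo_X(-C_0)(d-1))\twoheadrightarrow H^0(l,\mo_l(2d-a-3)),
\]
whose failure is measured exactly by $H^1(\mo_X(-l-C_0)(d-1))$ --- the group the theorem asserts to vanish. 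Since $l+C_0=2l+C_1$ is non-reduced, Lemma \ref{hf11} cannot be applied to it, so this surjectivity is the whole content of the statement rather than a formality.

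Passing to $S$-modules via $\Gamma_*$ does not rescue this for free: $\Gamma_*$ applied to $(\ast)$ is only left exact, and to get a genuine short exact sequence of graded modules one must replace $\Gamma_*(F'')$ by the image $A_l\otimes_S(I_{C_0}/I_X)$ of $I_{C_0}/I_X$ and then bound the regularity of that \emph{module} over $S$. That is precisely what the paper does: the syzygy computation of Lemma \ref{cas6} produces the resolution $0\to A_l(-d)\to A_l(-1)\oplus A_l(-a-1)\to A_l\otimes_S(I_{C_0}/I_X)\to 0$, giving Proposition \ref{cas8}, and the module-level inequality of Theorem \ref{syz1} then bounds the regularity of the kernel, which sheafifies to $\mo_X(-l-C_0)$. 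You quote that resolution, but only to read off the degree of the line bundle on $l$; the degreewise information it carries --- which is what controls the section maps --- is the part your argument discards. To repair the proof you would have to establish the displayed surjectivity directly, which leads back to Lemma \ref{cas6}.
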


\begin{proof}
 Consider the natural surjective morphim $A_X \to A_l$.
 Tensoring this by $I_{C_0}/I_X$, we obtain the short exact sequence,
 \[0 \to \ker(p) \to I_{C_0}/I_X \xrightarrow{p} A_{l} \otimes_S I_{C_0}/I_X \to 0.\]
 Using Proposition \ref{cas8} and Theorem \ref{syz1} we conclude that $\widetilde{\ker(p)}$ is $d$-regular. 
 It remains to prove that $\widetilde{\ker{p}} \cong \mo_X(-C_0-l)$.

 Since $\Gamma_*(\mo_X(-C_0))=I_{C_0}/I_X$ (by definition) and $\Gamma_* \mo_l=A_l$, \cite[Proposition II.$5.15$]{R1}
 implies that $\widetilde{I_{C_0}/I_X}$ and $\widetilde{A_l}$ is isomorphic to $\mo_X(-C_0)$ and $\mo_l$, respectively. 
 Now, the associated module functor $\widetilde{ }$  is exact and commutes with tensor product (\cite[Proposition II.$5.2$]{R1}).
 Applying this functor to the last short exact sequence we get,
 \[0 \to \widetilde{\ker(p)} \to \mo_X(-C_0) \xrightarrow{\widetilde{p}} \mo_l \otimes \mo_X(-C_0) \to 0\]
 where $\widetilde{p}$ arises from tensoring by $\mo_X(-C_0)$ the natural surjective morphism
 $\mo_X \to \mo_l$.
 
 Consider now the short exact sequence \[0 \to \mo_X(-l) \to \mo_X \to \mo_l \to 0.\]
 Since $\mo_X(-C_0)$ is a flat $\mo_X$-module, we get the short exact sequence,
 \[0 \to \mo_X(-C_0-l) \to \mo_X(-C_0) \xrightarrow{\widetilde{p}} \mo_l \otimes_{\mo_X} \mo_X(-C_0) \to 0.\]
 By the universal property of the kernel, $\widetilde{\ker{p}}$ is isomorphic to $\mo_X(-C_0-l)$.
 Hence, $\mo_X(-C_0-l)$ is $d$-regular.
\end{proof}

\subsection{Examples of Non-reduced components of $\NL_d$}\label{sum}

\begin{para}
 Before we come to the final result of this article we recall a result by Kleiman and Altman which tells us given a curve when does there exist a \emph{smooth} surface in $\p3$ containing it.
 This will be used to prove the existence of certain components of the Noether-Lefschetz locus. We then prove non-reducedness.
\end{para}

\begin{note}
Let $C$ be a projective curve in $\p3$. Denote by \[D_e:=\{x \in C| \dim \Omega^1_{C,x}=e\}.\]The theorem in this case states that,
 \begin{thm}[{\cite[Theorem $7$]{kleim}}]\label{exi1}
  If for any $e>0$ suc
  that $D_e \not= \emptyset$ we have that $\dim D_e+e$ is less than $3$ then there exists a smooth surface in $\p3$ containing $C$. Moreover, if $C$ is $d-1$-regular then
 there exists a smooth degree $d$ surface containing $C$.
 \end{thm}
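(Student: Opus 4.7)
The plan is a Bertini-type argument applied to the linear system $|I_C(d)|$ of degree $d$ surfaces containing $C$. Away from the base locus (which is contained in $C$), classical Bertini guarantees that a generic member is smooth, so the content of the theorem lies in arranging smoothness along $C$ itself.

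I would introduce the incidence variety
\[I_d := \{(x,X) \in C \times |I_C(d)| : X \text{ is singular at } x\}\]
and bound $\dim \pr_2(I_d)$ via a fibre-dimension count through the first projection, stratifying $C$ by the locally closed subsets $D_e$. At a point $x \in D_e$, the embedding dimension of $C$ at $x$ equals $e$, so the image of $I_{C,x}$ inside $\mathfrak{m}_x/\mathfrak{m}_x^2$ is a subspace of dimension $3-e$. A degree $d$ surface $X$ containing $C$ is singular at $x$ precisely when its equation lies in $\mathfrak{m}_x^2 \cap I_{C,x}$, i.e.\ has trivial image in this $(3-e)$-dimensional subspace. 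Provided the evaluation map $H^0(I_C(d)) \to I_{C,x}/(I_{C,x} \cap \mathfrak{m}_x^2)$ is surjective, this is a codimension-$(3-e)$ condition on $|I_C(d)|$, and the hypothesis $\dim D_e + e < 3$ gives
\[\dim \pr_2\bigl(I_d \cap (D_e \times |I_C(d)|)\bigr) \le \dim D_e + \dim|I_C(d)| - (3-e) < \dim|I_C(d)|\]
for each nonempty stratum $D_e$. Note the hypothesis also forces $D_e = \emptyset$ for $e \ge 3$ (since $\dim D_e$ would have to be negative), so only finitely many strata contribute, and $\pr_2(I_d)$ is a proper subvariety of $|I_C(d)|$. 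Any surface in the nonempty complement is smooth, as required.

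The crux — and the source of the degree hypothesis in the moreover part — is the evaluation surjectivity above. Castelnuovo-Mumford $(d-1)$-regularity of $I_C$ supplies it by the standard yoga: it gives global generation of $I_C(m)$ for $m \ge d-1$ together with the vanishing $H^1(I_C(d-1))=0$, which together let one chase through the sequences obtained by tensoring $I_C$ with the $\mathfrak{m}_x$-adic filtration $\mathcal{O}_{\p3} \supset \mathfrak{m}_x \supset \mathfrak{m}_x^2$ and conclude that $H^0(I_C(d))$ surjects onto every $1$-jet of $I_C$ at every $x \in C$. For the first (unqualified) part of the theorem, one simply chooses $d$ exceeding $\mathrm{reg}(I_C)+1$, so that the same argument applies. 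I expect the main obstacle to be extracting the jet-surjectivity cleanly in the setting where $I_C$ need not be locally free; all other pieces are standard Bertini bookkeeping.
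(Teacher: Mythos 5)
This theorem is quoted verbatim from Altman--Kleiman (\cite[Theorem 7]{kleim}) and the paper supplies no proof of its own, so there is no internal argument to compare against; what you have written is, in substance, the proof from the original source: classical Bertini off the base locus, plus an incidence-variety dimension count over the strata $D_e$, where the key point is that singularity at $x\in D_e$ is a codimension-$(3-e)$ linear condition on $|\I_C(d)|$ once $H^0(\I_C(d))$ surjects onto the first-order jets of the ideal at $x$. The one step you flag as delicate --- jet surjectivity when $\I_C$ is not locally free --- is in fact easier than you anticipate: since $\mathfrak{m}_x\I_{C,x}\subseteq \I_{C,x}\cap\mathfrak{m}_x^2$, the target $\I_{C,x}/(\I_{C,x}\cap\mathfrak{m}_x^2)$ is a quotient of the fibre $\I_C(d)\otimes k(x)$, so global generation of $\I_C(d)$ (which $(d-1)$-regularity already provides) gives the surjectivity by Nakayama, with no $H^1$ vanishing or $\mathfrak{m}_x$-adic filtration chase required. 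With that substitution the argument is complete and correct.
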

\end{note}

\begin{para}
We now collect the previous results to give some examples of non-reduced components of $\NL_d$. In the following theorem the phrase \emph{C is general}
would mean that the point on the Hilbert scheme corresponding to the curve $C$ is a smooth point which implies that no further irreducible component of the Hilbert scheme pass through this point.
\end{para}

\begin{thm}\label{a71}
The following statements are true:
\begin{enumerate}
 \item Let $C$ be a smooth curve or a curve with at most double points 
 (i.e., points $x \in C_{\red}$ such that $\dim \Omega^1_{C_{\red},x}=2$). Take $d>\deg(C)+4$. Then there exists smooth 
 degree $d$ surfaces in $\p3$ containing $C$.
 \item Let $C$ be a curve and $C \subset X$, a degree $d$ surface. Let $\gamma=[C]$. Then $\NL(\gamma)$ is non-reduced when 
\begin{enumerate}[(a)]
\item $C$ is a general element in $L$ as in Theorem \ref{a83} and $d \ge  \deg(C)+4$.
\item $C$ is a general element in $L$ is as in Theorem \ref{a84} and $\ov{\NL(\gamma)}_{\red}$ coincides with $\pr_2(\pr^{-1}(L))_{\red}$ where $P$ is the Hilbert polynomial of $C$ and $\pr_i$ are natural projection
morphisms from $H_{P,Q_d}$ to the respective components.
\end{enumerate} 
\end{enumerate}
\end{thm}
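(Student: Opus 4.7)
The three claims need three distinct ingredients assembled from the article, which I would combine as follows.

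For part (1), I would apply Theorem \ref{exi1}. The smooth/double-point hypothesis forces $D_1$ to be at most one-dimensional and $D_2$, when nonempty, to be zero-dimensional, so $\dim D_e + e < 3$ for every $e > 0$ with $D_e \neq \emptyset$. The Gruson--Lazarsfeld--Peskine theorem (already cited in Lemma \ref{hf11}) gives that $I_C$ is $\deg(C)$-regular, and since $d - 1 > \deg(C) + 3$, the ideal is $(d-1)$-regular. The second clause of Theorem \ref{exi1} then produces a smooth degree $d$ surface through $C$.

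For part (2)(a), the plan is to apply Theorem \ref{a4} directly. With $C$ a smooth curve and $d \ge \deg(C) + 4$, Lemma \ref{hf11} simultaneously supplies semi-regularity of $C$ (via $H^1(\mo_X(C))=0$) and $H^1(\mo_X(-C)(d))=0$. Because $C$ is general in $L$, the Hilbert scheme $H_P$ is locally irreducible at $C$, and the vanishing $H^1(\I_C(d))=0$ makes $\pr_1\colon H_{P,Q_d} \to H_P$ a projective bundle in a neighborhood of $(C,X)$, so $L_\gamma$ coincides with $L$ both in dimension and in tangent space at $C$. Theorem \ref{a4} then reduces non-reducedness of $\ov{\NL(\gamma)}$ to non-reducedness of $L$, which is the content of Kleppe's Theorem \ref{a83}.

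Part (2)(b) is the main obstacle: for $C = 2l + C_1$ semi-regularity is not evident, so Theorem \ref{a4} does not apply off-the-shelf. My plan is to reprove the dimension comparison of that theorem using only the one-sided inclusion $\pr_2(T_{(C,X)}H_{P,Q_d}) \subseteq T_X\NL(\gamma)$, which is read directly off the diagram of Theorem \ref{hf12} without invoking semi-regularity (only the reverse inclusion needed it). Theorem \ref{cas12} gives $d$-regularity of $\mo_X(-C)$, hence $H^1(\mo_X(-C)(d))=0$, and then $\rho_C$ is surjective by Corollary \ref{dim4}. The fiber-dimension argument of Proposition \ref{dim}, combined with the standing hypothesis $\ov{\NL(\gamma)}_{\red} = \pr_2(\pr^{-1}(L))_{\red}$, pins down
\[\dim\ov{\NL(\gamma)} = h^0(\mo_X(-C)(d)) + \dim L - h^0(\N_{C|X}),\]
while the tangent-space count transcribed from Theorem \ref{a4}'s proof yields
\[\dim T_X\NL(\gamma) \ge h^0(\mo_X(-C)(d)) + \dim T_C L - h^0(\N_{C|X}).\]
Subtracting gives $\dim T_X\NL(\gamma) - \dim\ov{\NL(\gamma)} \ge \dim T_C L - \dim L > 0$, the strict inequality being exactly the non-reducedness of $L$ furnished by Theorem \ref{a84}.
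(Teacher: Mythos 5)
Your proposal follows the paper's own proof essentially step for step: Theorem \ref{exi1} plus the Gruson--Lazarsfeld--Peskine regularity bound for (1), Lemma \ref{hf11} together with Theorem \ref{a4} and Kleppe's example for (2)(a), and Theorem \ref{cas12} plus the fiber-dimension count of Proposition \ref{dim} and Martin-Deschamps--Perrin for (2)(b). Your observation in (2)(b) that only the unconditional inclusion $\pr_2(T_{(C,X)}H_{P,Q_d}) \subseteq T_X\NL(\gamma)$ is needed is exactly what the paper's argument relies on via Corollary \ref{hf12c}, and you have stated the direction of that inclusion correctly (the printed statement of that corollary has it reversed, though its proof and the ``furthermore'' clause make clear the intended content).
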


\begin{proof}
\begin{enumerate}
\item It follows from the proof of Lemma \ref{hf11} that $C_{\red}$ is $d-1$ regular. Using Theorem \ref{exi1}, we can then conclude that there exists smooth degree $d$ surfaces containing $C$.
\item Note that for all $C$ as in $(a)$ and $(b)$, $C$ is either smooth or $C_{\red}$ has at most double points.
Using $(1)$ there exists smooth surfaces $X$ in $\p3$ containing such $C$ of degree $d$. Take $\gamma=[C]$.
We now prove non-reducedness. 
\begin{enumerate}[(a)]
 \item Lemma \ref{hf11} tells us that $C$ is semi-regular and $h^1(\mo_X(-C)(d))=0$. Denote by $P$ the Hilbert polynomial of $C$.
 Theorem \ref{dim1} implies that there exists an irreducible components $H_\gamma$ in $H_{P,Q_d}$ such that 
 $\pr_2(H_\gamma)_{\red}=\ov{\NL(\gamma)}_{\red}$. Since $C$ is general in $L$, $\pr_1(H_\gamma)_{\red}=L_{\red}$.
 Using Corollary \ref{dim4} we notice that the natural morphim from $H^0(\N_{X|\p3})$ to $H^0(\N_{X|\p3} \otimes \mo_C)$ is surjective and 
 $T_CL_\gamma=H^0(\N_{C|\p3})$, where $L_\gamma=\pr_1(H_\gamma)$ and $T_CL_\gamma$ is as defined in \ref{dim2}. 
 Theorem \ref{a83} states that $\dim L_\gamma < h^0(\N_{C|\p3})$ for $C$ in $L_\gamma$, general.
 Finally, Theorem \ref{a4} shows that $\ov{\NL(\gamma)}$ is non-reduced.
 \item Using Theorem \ref{cas12}, we have that $h^1(\mo_X(-C)(d))=0$, hence Corollary \ref{dim4} implies the natural morphism from $H^0(\N_{X|\p3})$ to $H^0(\N_{X|\p3}  \otimes \mo_C)$ is surjective. 
 Following exactly as in the proof of Theorem \ref{dim} by replacing $H_\gamma$ by an irreducible component in $\pr_1^{-1}(L)$ such that $\pr_2(H_\gamma)_{\red} \cong \ov{\NL(\gamma)}$, we get 
 after using Lemma \ref{a4e} that $\dim \NL(\gamma)=\dim I_d(C)-1+\dim L$.
 Using Corollary \ref{hf12c} and Lemma \ref{a4e} we get 
 $\dim T_X(\NL(\gamma)) \ge \dim I_d(C)-1+h^0(\N_{C|\p3}).$
Theorem \ref{a84} implies $\dim L<h^0(\N_{C|\p3})$, hence non-reducedness of $\ov{\NL(\gamma)}$.
\end{enumerate}
\end{enumerate}
\end{proof}

We give an example satisfying the conditions in Theorem \ref{a71}$2$(b).

\begin{thm}
 Let $d \ge 5$, $X$ be a smooth degree $d$ surface containing $2$ coplanar lines, $l_1, l_2$. Let $C$ be a divisor in $X$ of the form $2l_1+l_2$, $\gamma$ the cohomology class of $C$.
 Then, $\ov{\NL(\gamma)}_{\red}$ is isomorphic to $\ov{(\NL([l_1]) \cap \NL([l_2]))}_{\red}$. In particular, $\ov{\NL(\gamma)}$ is non-reduced.
\end{thm}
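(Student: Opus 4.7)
The plan is to reduce the theorem to Theorem~\ref{a71}$2$(b), applied with $a = 1$ in Notation~\ref{a7}. Let $P$ denote the Hilbert polynomial of $C = 2l_1 + l_2$ as a divisor on $X$, and let $L$ be the non-reduced irreducible component of $H_P$ produced by the Martin-Deschamps-Perrin Theorem~\ref{a84}, whose general element has the form $2l_1' + l_2'$ for two coplanar lines $l_1', l_2'$. I take $C$ general in $L$, so that by the convention opening \S\ref{sum}, $L$ is the unique irreducible component of $H_P$ passing through $C$. The key step is to verify the chain of identifications
\[
\ov{\NL(\gamma)}_{\red} \;=\; \pr_2(\pr_1^{-1}(L))_{\red} \;=\; \ov{\NL([l_1]) \cap \NL([l_2])}_{\red},
\]
which is simultaneously the isomorphism claim of the theorem and the missing hypothesis of Theorem~\ref{a71}$2$(b).

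The second equality I would prove by mutual containment. A surface in $\pr_2(\pr_1^{-1}(L))$ contains some $D \in L$; for general $D = 2l_1' + l_2'$ this puts both lines on the surface, whose cohomology classes agree with $[l_1], [l_2]$ by parallel transport, so the surface lies in $\NL([l_1]) \cap \NL([l_2])$. Conversely, a surface $X'$ near $X$ in $\NL([l_1]) \cap \NL([l_2])$ contains effective divisors in the classes $[l_1], [l_2]$ by the Lefschetz $(1,1)$-theorem; each has hyperplane degree one, hence is a line; and the intersection number $l_1 \cdot l_2 = 1$ is a topological invariant, so the two lines intersect and are therefore coplanar. The divisor $2l_1' + l_2'$ then lies in $L$.

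For the first equality, the inclusion $\supseteq$ is immediate since $\gamma = [D]$ for every $D \in L$ by parallel transport. For $\subseteq$, I would combine two ingredients. First, the linear-algebraic observation that if $\gamma$ is of type $(1,1)$ on $X'$ but $[l_1]$ has nonzero $(2,0)$-part, then $[l_2] = \gamma - 2[l_1]$ also has nonzero $(2,0)$-part, so in $\NL(\gamma)$ the two classes $[l_1]$ and $[l_2]$ can only leave $H^{1,1}$ simultaneously. Second, the individual lines $l_i$ are semi-regular on $X$ (Lemma~\ref{hf11} applied with $\deg(l_i) = 1$ yields $h^1(\mo_X(l_i)) = 0$ for $d \ge 5$), so Bloch's Theorem~\ref{b71} applied to each line separately upgrades the locus ``$[l_1]$ and $[l_2]$ simultaneously Hodge'' to surfaces actually containing deformations of each line, hence to $\pr_2(\pr_1^{-1}(L))$. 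The supplementary input $h^1(\mo_X(-C)(d)) = 0$ needed for the dimension/tangent estimates inside Theorem~\ref{a71}$2$(b) follows directly from the $d$-regularity of $\mo_X(-C)$ established in Theorem~\ref{cas12}.

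The main obstacle I anticipate is precisely this reverse inclusion $\ov{\NL(\gamma)}_{\red} \subseteq \ov{\NL([l_1]) \cap \NL([l_2])}_{\red}$: Bloch's Theorem~\ref{dim1} cannot be invoked directly on $C$ itself, because the non-reduced divisor $C = 2l_1 + l_2$ is \emph{not} semi-regular — a Riemann-Roch computation on a smooth quintic gives $h^1(\mo_X(C)) = 3 \ne 0$ when $d = 5$ — so the argument must route through the individually semi-regular lines $l_1, l_2$ as above. Once the three-way equality is secured, Theorem~\ref{a71}$2$(b) delivers both the isomorphism and the non-reducedness of $\ov{\NL(\gamma)}$ in one stroke.
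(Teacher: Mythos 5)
Your reduction to Theorem~\ref{a71}$2$(b) with $a=1$, your identification of the crux as the three-way equality $\ov{\NL(\gamma)}_{\red}=\pr_2(\pr_1^{-1}(L))_{\red}=\ov{\NL([l_1])\cap\NL([l_2])}_{\red}$, and your observation that $C=2l_1+l_2$ fails to be semi-regular (so that Theorem~\ref{dim1} cannot be applied to $C$ itself) all match the structure of the argument. But your proof of the decisive inclusion $\ov{\NL(\gamma)}_{\red}\subseteq\ov{\NL([l_1])\cap\NL([l_2])}_{\red}$ has a genuine gap. The ``linear-algebraic observation'' only shows that along $\NL(\gamma)$ the parallel transports of $[l_1]$ and $[l_2]$ leave $H^{1,1}$ \emph{simultaneously}; it does not show that they stay in $H^{1,1}$. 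Indeed, if $[l_1]^{0,2}=\alpha\neq 0$ and $[l_2]^{0,2}=-2\alpha$, then $\gamma^{0,2}=2\alpha-2\alpha=0$, so a priori $\gamma$ can remain a Hodge class while both line classes cease to be. Consequently your second ingredient (Bloch's theorem applied to each line) has nothing to act on: it requires precisely the individual classes $[l_i]$ to remain Hodge along the deformation, which is the unproved point. Everything else in your proposal (the Hilbert-scheme identification, effectivity and coplanarity of the deformed lines, the vanishing $h^1(\mo_X(-C)(d))=0$ from Theorem~\ref{cas12}) is fine, but it all hangs on this one inclusion.

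The paper closes this gap by a codimension count rather than pointwise Hodge theory: since $\NL(\gamma)$ contains the (irreducible, codimension $2d-6$) family of surfaces containing two coplanar lines, one has $\codim\ov{\NL(\gamma)}\le 2d-6$, and if the inequality were strict then the explicit classification of components of codimension $\le 2d-7$ (Green, Voisin; \cite[Proposition $1.1$]{v3}) would force $\ov{\NL(\gamma)}$ to be the reduced locus of surfaces containing a line or a conic, i.e.\ $\NL(\gamma)=\NL(\gamma')$ for $\gamma'$ the class of a line or conic. This possibility is then excluded by showing, via \cite[$4.a.5$]{GH} and Maclean's Propositions $4$--$5$, that $\gamma'$ would have to be a rational combination $a_1[l_1]+a_2[l_2]+b[H_X]$, and that the resulting system of intersection-number equations (using $\gamma'\cdot H_X$, $\gamma'\cdot l_i$, ${\gamma'}^2$ and the adjunction formula) has no solutions. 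Hence $\codim\ov{\NL(\gamma)}=2d-6$, and irreducibility forces $\ov{\NL(\gamma)}_{\red}$ to coincide with the two-coplanar-lines locus. If you want to complete your argument you must either reproduce an exclusion of this kind or find another way to rule out $\ov{\NL(\gamma)}$ being one of the larger known components; the simultaneous-degeneration remark alone cannot do it.
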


\begin{proof} 
 Note that $\NL(\gamma)$ contains the space of smooth degree $d$ surfaces containing two coplanar lines. But this space is of codimension $2d-6$. So,
 $\codim \ov{\NL(\gamma)} \le 2d-6$. If $\codim \ov{\NL(\gamma)} < 2d-6$ \cite[Proposition $1.1$]{v3} implies that $\ov{\NL(\gamma)}$ is reduced and either it parametrizes smooth degree $d$ 
 surfaces containing a line or containing a conic. This means that there exists $\gamma'$ a class of a line or a conic in $X$ such that $\NL(\gamma)=\NL(\gamma')$.
 Note that if $\gamma'_{\prim}$ is a multiple of $[l_1]_{\prim}$ or $[l_2]_{\prim}$ or $[l_1+l_2]_{\prim}$ then this locus parametrizes surfaces such that the cohomology class of $l_1$ and $l_2$ remains of type $(1,1)$.
 Lemma \ref{hf11} tells us $l_1, l_2$ are semi-regular for $d \ge 5$. Theorem \ref{dim1} then implies this space parametrizes surfaces containing $2$ coplanar lines, hence $\codim \ov{\NL(\gamma)} = 2d-6$, which gives us a contradiction.
 
 Choose $X$ in $\NL(\gamma)$ such that if $H$ is the hyperplane containing $l_1 \cup l_2$ then $H \cap X=l_1 \cup l_2 \cup D$ with $D$ irreducible. Note that such $X$ exists and the new $\gamma$ corresponding
 to the cohomology class of the divisor $2l_1+l_2$ in this surface defines the same component $\ov{\NL(\gamma)}$.
 Denote by $E$ the preimage in $H^0(\mo_{\p3}(d))$ of the vector space $T_X(\NL(\gamma)) \subset H^0(\mo_X(d))$. For an integer $k$, denote by \[E_k:=[E:S_{d-k}]:=\{P \in S_k|P.S_{d-k} \subset E\},\] where 
 $S_{d-k}$ is $H^0(\mo_{\p3}(d-k))$. Now, \cite[$4.a.5$]{GH} implies that $I(l_1 \cup l_2)$ is contained in $\oplus_k E_k$. 
  Now, \cite[Proposition $4$]{c1} and the argument after \cite[Proposition $5$]{c1} tells us that $\gamma'=a_1[l_1]+a_2[l_2]+b[H_X]$, where $\gamma'$ is the class $[C']$ of a line or a conic
  and $a_1, a_2, a_3$ are rationals. 
  
  By the above argument, $C'$ is not $l_1, l_2$ or $l_1 \cup l_2$. Denote by $t_0$ and $t_1$ the integers $l_1.C'$ and $l_2.C'$, respectively. 
  Then, intersecting the equality $[C']=a_1[l_1]+a_2[l_2]+b[H_X]$ by $H_X, l_1, l_2$ and $C'$, respectively, we have
  \begin{eqnarray}
  \deg(C')&=&a_1+a_2+bd\\
  t_0&=&a_1(2-d)+a_2+b\\
  t_1&=&a_1+a_2(2-d)+b\\
  C'^2&=&a_1t_0+a_2t_1+\deg(C')b
  \end{eqnarray}
  Assume that $l_i\not| C'$. Using adjunction formula one can check $C'^2$ is $2-d$ if $C'$ is a line and $6-2d$ if $C'$ is a conic.
  Then, using any standard programming language (for eg. Maple), one can see that there does not exist a solution to these set of equations.
  
  The only case that remains is when $C'$ is a conic and of the form $l' \cup l_i$ for $i=1$ or $2$ and $l'$ is distinct from $l_1, l_2$. We can then replace in the above equation $C'$ by $l'$ and replace 
  $a_1$ (resp. $a_2$) by $a_1-1$ (resp. $a_2-1$) if $i=1$ (resp. $i=2$). Then the above result tells us again that there are no solutions. So, $\codim \ov{\NL(\gamma)}$ has to be $2d-6$.
  \end{proof}

\bibliographystyle{alpha}
 \bibliography{researchbib}
 
\end{document}